\documentclass[11pt]{article}

\usepackage[a4paper,margin=27mm]{geometry}
\usepackage[T1]{fontenc}
\usepackage{lmodern}
\usepackage{microtype}
\usepackage{amsmath,amssymb,amsthm,mathtools,bm}
\usepackage{booktabs,array,tabularx}
\usepackage{enumitem}
\usepackage{graphicx}
\usepackage{xcolor}
\usepackage[round,authoryear]{natbib}

\usepackage{indentfirst}

\definecolor{linkblue}{RGB}{45,75,135}
\usepackage{hyperref}
\hypersetup{
    colorlinks=true,
    linkcolor=linkblue,
    filecolor=magenta,      
    urlcolor=blue, 
    citecolor=teal
}
\usepackage{cleveref}

\allowdisplaybreaks
\numberwithin{equation}{section}

\newtheorem{theorem}{Theorem}[section]
\newtheorem{proposition}[theorem]{Proposition}
\newtheorem{lemma}[theorem]{Lemma}
\newtheorem{corollary}[theorem]{Corollary}
\theoremstyle{definition}

\theoremstyle{remark}
\newtheorem{remark}[theorem]{Remark}

\DeclareMathOperator{\Res}{Res}

\newcommand{\E}{\mathbb E}
\newcommand{\R}{\mathbb R}
\newcommand{\Q}{\mathbb Q}
\newcommand{\Z}{\mathbb Z}

\newcommand{\law}{\mathcal L}
\newcommand{\dd}{\,\mathrm d}
\newcommand{\Ctwo}{\mathcal C_2}
\newcommand{\Dtwo}{D_2}
\newcommand{\vct}[1]{\bm{#1}}

\newcommand{\PaperAuthors}{%
    Alexandra-Ionela Andriciuc$^{1, 2}$\thanks{E-mails:
\texttt{andriciucalexandra@yahoo.com}; \texttt{alexandra-ionela.andriciuc@unibuc.ro}}, 
    Ionel Popescu$^{1, 2}$\thanks{E-mails:
\texttt{ionel.popescu@fmi.unibuc.ro};
\texttt{ionel.popescu@imar.ro}}, 
    David-Corneliu Turturean$^{3}$\thanks{E-mail:
\texttt{davidturturean@gmail.com}}%
}

\newcommand{\PaperAffiliations}{%
    \footnotesize
    $^{1}$ University of Bucharest, Faculty of Mathematics and Computer Science, Bucharest, Romania \\
    \footnotesize
    $^{2}$ Simion Stoilow Institute of Mathematics of the Romanian Academy, Bucharest, Romania \\
    \footnotesize
    $^{3}$ Massachusetts Institute of Technology, Cambridge, USA%
}

\title{\textbf{Non-Gaussianity of the Stagnation Law \\ in Particle Swarm Optimization}\\[0.35em]
\large An exact moment certificate}

\author{\PaperAuthors \\[0.5em] \PaperAffiliations} 
\date{}
\begin{document}
\maketitle

\begin{abstract}
We study one-dimensional particle swarm optimization during stagnation, with two fixed distinct attractors and equal independent uniform acceleration ranges.  The position then satisfies a second-order random affine recurrence.  For inertia $w$ and acceleration range $c$, we prove that throughout the open mean-square stability region
\[
 -1<w<1,\qquad c>0,\qquad 12(1-w^2)-c(7-5w)>0,
\]
no invariant position marginal, and hence no limiting position marginal, can be Gaussian. This solves the open Problem 18 in \cite{ParticleSwarmProblems}. The proof compares the stationary moment equations with the Gaussian moment identities through order eight.  A Hermite-polynomial formulation gives explicit fourth- and sixth-order compatibility conditions whose common solutions lie on a degree-$107$ polynomial branch.  Exact eighth-order equations exclude every point on that branch.  The final certificate is verified using arithmetic modulo $23$ and independently modulo $1{,}000{,}003$.  A separate raw-moment implementation produces exact polynomials $Q_4,Q_6,Q_8$ in $(w,c)$ and verifies the same obstruction over the rational numbers.  The fourth- and sixth-order curves have a genuine admissible intersection, but the eighth-order condition removes it, showing why low-order Gaussian diagnostics are insufficient.  All code, exact polynomials, logs, and plot-validation data are supplied as online resources.
\end{abstract}

\medskip
\noindent\textbf{Keywords.} particle swarm optimization; stagnation; stationary distribution; Gaussian moments; stochastic recurrence; computer-assisted proof.

\section{Introduction}

Particle swarm optimization (PSO) was introduced by Kennedy and Eberhart as a population-based stochastic search method \citep{KennedyEberhart1995,eberhart1995new}.  Its compact update rule produces a nontrivial random dynamical system.  Stability and convergence were studied from deterministic and stochastic viewpoints by, among others, \citet{ClercKennedy2002,Trelea,CampanaFasanoPinto2006,kadirkamanathan2006stability,Differentnoisedistributions,BonyadiMichalewicz2015,ErskineJoyceHerrmann2017} and \citet{cleghorn2018particle}.  The sampling distribution and its moments during stagnation were analyzed by \citet{Clerc2006,Poli2007Moments,PoliBroomhead2007,jiang2007stochastic,poli2008dynamics,Poli2009,liu2015order,cleghorn2019particle, dong2019order}.

Problem~18, ``Bell curve,'' on the Particle Swarm Central problem list asks for a proof that, in one dimension with fixed personal and neighborhood best positions, the empirical position histogram is Gaussian-like but not Gaussian \citep{ParticleSwarmProblems}.  We give a complete algebraic resolution of this problem.

The proof follows a concrete moment strategy.  If the stationary position were Gaussian, its centered moments would obey
\[
 \E X^4=3(\E X^2)^2,\qquad
 \E X^6=15(\E X^2)^3,\qquad
 \E X^8=105(\E X^2)^4.
\]
The PSO recursion supplies a second set of equations for moments of each position and for the mixed moments of two consecutive positions.  At each total degree these equations are linear in the new mixed moments.  Eliminating those unknowns produces polynomial conditions on the algorithmic parameters.

Two versions of this calculation are developed.
\begin{enumerate}[leftmargin=1.7em]
\item In the \emph{Hermite formulation}, Gaussian one-dimensional moments disappear automatically from the equations.  The fourth- and sixth-order conditions reduce the parameter set to the roots of one degree-$107$ polynomial.  The eighth-order equations then give an exact contradiction.
\item In the \emph{raw-moment formulation}, one solves an $(n+1)\times(n+1)$ system at each even degree $n$ and forms the usual Gaussian moment defect.  This produces the explicit curves $Q_4=0$, $Q_6=0$, and $Q_8=0$ shown in Figures \ref{fig:curves} and \ref{fig:zoom}, thus supplying an independent exact audit.
\end{enumerate}
The two calculations are tied by an exact coordinate identity and produce the same degree-$107$ elimination polynomial.  This makes the final conclusion independent of a single choice of basis or software path.

The computer-assisted part is deliberately narrow.  The probabilistic reductions, stability region, moment equations, and logical implication of the certificate are given in the paper.  The computer performs exact polynomial arithmetic and exact linear algebra.  Floating-point calculations are used only to draw and diagnose the parameter curves; the computational supplement quantifies their resolution and makes clear that they are not part of the proof.

\paragraph{Scope.}
The theorem concerns one spatial dimension, two fixed distinct attractors, independent acceleration variables with the common law $\operatorname{Unif}[0,c]$, and parameters in the open mean-square region.  It does not by itself cover unequal cognitive and social ranges, moving attractors, velocity clamping, or coupled multidimensional variants.

\paragraph{Organization.}
Sections~\ref{sec:model}--\ref{sec:second} formulate the model and derive the admissible parameter domain.  Sections~\ref{sec:hermite}--\ref{sec:eighth} give the Gaussian-moment contradiction.  Section~\ref{sec:raw-bridge} presents the independent raw-moment formulation, \Cref{sec:geometry} explains the parameter curves, and \Cref{sec:audit} summarizes verification and reproducibility.

\section{Original PSO model, normalization, and main theorem}\label{sec:model}

Let $P,G\in\R$ be fixed attractors and let
\[
 \phi_{1,t},\phi_{2,t}\stackrel{\mathrm{iid}}{\sim}\operatorname{Unif}[0,c],
 \qquad t\ge1,
\]
with all variables independent across indices and time.  Eliminating velocity from the usual one-dimensional PSO update gives
\begin{equation}\label{eq:original-scalar}
 \mathsf X_{t+1}
 =\bigl(1+w-\phi_{1,t+1}-\phi_{2,t+1}\bigr)\mathsf X_t
 -w\mathsf X_{t-1}
 +P\phi_{1,t+1}+G\phi_{2,t+1}.
\end{equation}
Equivalently, for $Z_t=(\mathsf X_t,\mathsf X_{t-1})^{\mathsf T}$,
\begin{equation}\label{eq:matrix-model}
 Z_{t+1}=A_{t+1}Z_t+B_{t+1},
 \quad
 A_t=\begin{pmatrix}
 1+w-\phi_{1,t}-\phi_{2,t}&-w\\[1mm]1&0
 \end{pmatrix},
 \quad
 B_t=\binom{P\phi_{1,t}+G\phi_{2,t}}{0}.
\end{equation}
Define
\begin{equation}\label{eq:D2}
 \Dtwo(w,c):=12(1-w^2)-c(7-5w)
\end{equation}
and
\begin{equation}\label{eq:C2}
 \Ctwo:=\{(w,c)\in\R^2:-1<w<1,\ c>0,\ \Dtwo(w,c)>0\}.
\end{equation}
This is the standard open second-moment stability region for the stagnation model.  Indeed, if $a=1+w-\phi_1-\phi_2$, then the homogeneous covariance operator on the basis $(x^2,xy,y^2)$ is
\begin{equation}\label{eq:T2}
 T_2=\begin{pmatrix}
 \E[a^2]&-2w\E[a]&w^2\\
 \E[a]&-w&0\\
 1&0&0
 \end{pmatrix},
\end{equation}
where
\[
 \E[a]=1+w-c,
 \qquad
 \E[a^2]=(1+w)^2-2c(1+w)+\frac{7c^2}{6}.
\]
\begin{lemma}[Second-moment stability region]\label{lem:stability-region}
For $-1<w<1$ and $c>0$, one has $\rho(T_2)<1$ if and only if $\Dtwo(w,c)>0$.  In particular,
\begin{equation}\label{eq:detI-T2}
 \det(I-T_2)=\frac{c\Dtwo(w,c)}6.
\end{equation}
\end{lemma}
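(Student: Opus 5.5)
The plan is to work directly with the characteristic polynomial of $T_2$ and apply the Schur--Cohn--Jury criterion for real cubics. Write $m=\E[a]=1+w-c$ and $s=\E[a^2]$. Expanding $\det(\lambda I-T_2)$ along the last row of $\lambda I-T_2$ gives
\[
 p(\lambda)=\lambda^3+(w-s)\lambda^2+\bigl(2wm^2-sw-w^2\bigr)\lambda-w^3 .
\]
The identity \eqref{eq:detI-T2} is then the statement $p(1)=\det(I-T_2)$. Evaluating,
\[
 p(1)=(1+w)(1-w^2-s)+2wm^2 .
\]
I will substitute the explicit expressions for $m$ and $s$ and expand. The constant and $w$-only terms should cancel, since $s=(1+w)^2$ and $m=1+w$ at $c=0$. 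A factor $c$ should then split off, leaving a quantity linear in $c$ that equals $\Dtwo/6$. This is a routine polynomial check.

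For the equivalence I will first do the easy direction ("only if"). Suppose $\Dtwo\le 0$ with $c>0$. Then $p(1)\le 0$, and since $p$ is monic cubic, $p(\lambda)\to+\infty$ as $\lambda\to+\infty$. Hence $p$ has a real root $\lambda\ge 1$, and so $\rho(T_2)\ge 1$.

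For the "if" direction I will use the Jury conditions for the cubic $\lambda^3+a_2\lambda^2+a_1\lambda+a_0$, which here has $a_2=w-s$, $a_1=2wm^2-sw-w^2$ and $a_0=-w^3$. All roots lie in the open unit disc if and only if the following four conditions hold:
\begin{enumerate}[leftmargin=1.7em]
\item $p(1)>0$;
\item $-p(-1)>0$;
\item $|a_0|<1$;
\item $|1-a_0^2|>|a_1-a_0a_2|$.
\end{enumerate}
Condition (i) is exactly $\Dtwo>0$ by the computation above. Condition (iii) is just $|w|<1$.

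For condition (ii), $-p(-1)=1+a_1-a_2+a_0$ should factor, after expansion, as a product of positive factors such as $(1+w)$ times a quadratic in $c$. I expect this to be positive on the whole strip $-1<w<1$, $c>0$. If it is not, I will show positivity using $\Dtwo>0$, which bounds $c<12(1-w^2)/(7-5w)$.

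The main obstacle is condition (iv): it is a genuinely two-variable polynomial inequality. My first step is to note that $1-a_0^2=1-w^6>0$, so (iv) splits into the two one-sided inequalities
\[
 1-w^6 \pm\bigl(a_1+w^3a_2\bigr)>0 .
\]
I will expand each of these and try to factor out $(1\pm w)$ powers. Then I will show the remaining factor is positive on the triangle-like region cut out by $c>0$ and $\Dtwo>0$. To do this I will write $c=t\cdot 12(1-w^2)/(7-5w)$ with $t\in(0,1)$, which turns the problem into positivity of a polynomial on $(-1,1)\times(0,1)$. There I expect a sum-of-positive-terms decomposition, or monotonicity in $t$ together with checking the endpoints $t=0$ and $t=1$.

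As a fallback I will use a continuity argument. At $c\to 0^+$ the spectrum is inside the unit disc for $|w|<1$. I would then show that no eigenvalue can reach the unit circle at $\lambda=-1$ (by condition (ii)) or at a pair $e^{\pm i\theta}$ (a resultant computation) while $\Dtwo>0$. Combined with the connectedness of $\Ctwo$, this gives $\rho(T_2)<1$ throughout.
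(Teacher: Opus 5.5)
\textbf{Verdict.} Your route is the paper's: apply the Jury/Schur--Cohn test to the explicit cubic. But the one step with real content, condition (iv), is left unproved, and the method you propose for it does not work on part of the range.

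\textbf{What is correct, and two small corrections.} Your $p(\lambda)$ is correct, and $p(1)=(1+w)(1-w^2-s)+2wm^2=c\Dtwo/6$. Your intermediate-value argument for the ``only if'' direction is fine.
\begin{itemize}
\item There is a sign slip in (ii): $-p(-1)=1+a_1-a_2-a_0$, not $+a_0$. Six times it is $(5w+7)c^2-12(1+w)^2c+12(1+w)(1+w^2)$. This does not factor through $(1+w)$, but its discriminant $-96(w-1)(w+1)(w^2-2)$ is negative, so it is positive for every $c$.
\item The ``$+$'' half of (iv) needs no work. Indeed $1-a_0^2+a_1-a_0a_2=(1+a_1)-a_0(a_0+a_2)>0$, because (i)--(ii) give $1+a_1>|a_0+a_2|$ and $|a_0|<1$.
\end{itemize}
So everything reduces to showing $1-w^6-a_1-w^3a_2>0$ for $0<c<c_{\max}=12(1-w^2)/(7-5w)$. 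Six times this quantity is the paper's $P_w(c)$.

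\textbf{The gap: you never prove this inequality, and monotonicity plus endpoints does not settle it.} With $c=tc_{\max}$,
\[
 6\bigl(1-w^6-a_1-w^3a_2\bigr)=\frac{6(1-w)^2(1+w)}{(7-5w)^2}\,g(w,t),
\]
where
\[
 g(w,t)=(1-w^3)(7-5w)^2+24w(1+w)\bigl[(7w^2-5)t^2+(1+w)(7-5w)t\bigr].
\]
The range of $w$ splits into three cases.
\begin{itemize}
\item For $w\ge0$ the bracket is nonnegative on $[0,1]$, so $g>0$ and you are done.
\item For $(-1-2\sqrt7)/9<w<0$, the $t^2$-coefficient of $g$ is positive. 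The critical point $t^*=(1+w)(7-5w)/\bigl(2(5-7w^2)\bigr)$ lies in $(0,1)$, so $g$ is not monotone and endpoint checks prove nothing. You must evaluate
\[
 g(w,t^*)=\frac{(7-5w)^2(1+w^2)(7w^3+6w^2+6w+5)}{5-7w^2}
\]
and show $7w^3+6w^2+6w+5>0$ there. This holds because the cubic is increasing and equals $1339/1000$ at $w=-7/10$.
\item The vertex bound cannot cover all $w<0$, because that cubic changes sign near $w\approx-0.844$. For $-1<w\le(-1-2\sqrt7)/9$ the minimum over $t\in[0,1]$ is at an endpoint. At $t=1$ one has $g=-(w^2+w+1)(25w^3-143w^2+71w-49)>0$, and at $t=0$, $g=(1-w^3)(7-5w)^2>0$.
\end{itemize}
This case split is exactly the one in the paper's proof, phrased there in the variable $c$.

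\textbf{Your fallback does not avoid this.} A real cubic with roots $e^{\pm i\theta}$ and $r$ must have $r=-a_0=w^3$, and then $1-w^6-a_1-w^3a_2=0$. So ``no eigenvalue crosses at $e^{\pm i\theta}$'' is the same nonvanishing statement. Moreover, at $c=0$ the spectrum is $\{1,w,w^2\}$, which touches the unit circle. Even the starting point of the continuity argument therefore needs a perturbation argument.
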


\begin{proof}
The characteristic polynomial is $\lambda^3+a_1\lambda^2+a_2\lambda+a_3$, where
\begin{align*}
 a_1&=-\frac{7c^2}{6}+2c(1+w)-w^2-w-1,\\
 a_2&=\frac{5c^2w}{6}-2cw(1+w)+w^3+w^2+w,
 \qquad a_3=-w^3.
\end{align*}
For a real monic cubic, the Schur--Jury inequalities (\cite{jury1962simplified,choo2011elementary}) are:
\[
 |a_3|<1,\quad
 1+a_1+a_2+a_3>0,\quad
 1-a_1+a_2-a_3>0,\quad
 1-a_2+a_1a_3-a_3^2>0.
\]
The first is exactly $|w|<1$, and the second is $c\Dtwo/6>0$.
For the third inequality, multiplication by $6$ gives
\[
 (5w+7)c^2-12(1+w)^2c+12(1+w)(w^2+1),
\]
a quadratic with positive leading coefficient and discriminant
$-96(w-1)(w+1)(w^2-2)<0$.

For the final inequality, let $P_w(c)$ denote six times its left side.  Then
\begin{align*}
 P_w(c)={}&w(7w^2-5)c^2+12w(1-w)(1+w)^2c\\
 &+6(1-w)^3(1+w)(w^2+w+1).
\end{align*}
At $c=0$ it is positive, and at
$c_{\max}=12(1-w^2)/(7-5w)$ it equals
\[
 \frac{-6(w-1)^2(w+1)(w^2+w+1)}{(5w-7)^2}
 \bigl(25w^3-143w^2+71w-49\bigr)>0.
\]
The cubic in parentheses is negative on $(-1,1)$: this is immediate for $w<0$, while for $0\le w<1$ it is at most $-143w^2+96w-49<0$.
If the leading coefficient of $P_w$ is nonpositive, endpoint positivity suffices by concavity.  If it is positive and $w>0$, then $P_w'(0)>0$, so $P_w$ is increasing.  Finally suppose it is positive and $w<0$.  If $9w^2+2w-3\ge0$, then $P_w'(c_{\max})\le0$, so convexity makes $P_w$ decreasing on the interval.  Otherwise $w>(-1-2\sqrt7)/9>-7/10$, and the discriminant
\[
 24w(w-1)^2(w+1)(w^2+1)(7w^3+6w^2+6w+5)
\]
is negative: the last factor is increasing and exceeds its value $1339/1000$ at $-7/10$.  Hence $P_w(c)>0$ throughout $0\le c\le c_{\max}$.  The four Jury inequalities are therefore equivalent to \Cref{eq:C2}.
\end{proof}

The usual affine-recursion construction therefore yields the unique square-integrable stationary law.  The non-Gaussian argument below is stronger than this existence statement: it rules out a Gaussian coordinate for \emph{any} invariant coupling and applies whenever the one-dimensional marginals are known to converge.

If $P=G$, centering at the common attractor removes the additive forcing.  Mean-square stability then leaves only the point mass at $P$ as a square-integrable stationary marginal.  Thus $P\ne G$ isolates the genuinely random, nondegenerate case.

From now on we assume that $P\ne G$.  Set
\begin{equation}\label{eq:affine-normalization}
 \mu:=\frac{P+G}{2},
 \qquad d:=\frac{G-P}{2},
 \qquad X_t:=\frac{\mathsf X_t-\mu}{d}.
\end{equation}
Then, a direct substitution reduce \Cref{eq:original-scalar} to
\begin{equation}\label{eq:normalized-recurrence}
 X_{t+1}
 =\bigl(1+w-\phi_{1,t+1}-\phi_{2,t+1}\bigr)X_t
 -wX_{t-1}-\phi_{1,t+1}+\phi_{2,t+1}, 
\end{equation}
with the normalized attractors being $-1$ and $1$.

\medskip

The following theorem is the main result of this paper. 

\begin{theorem}[Non-Gaussian stationary and limiting marginal]\label{thm:main}
Let $(w,c)\in\Ctwo$ and $P\ne G$.
 No invariant probability law for the two-coordinate Markov chain associated with \Cref{eq:matrix-model} can have a Gaussian first-coordinate marginal.

\end{theorem}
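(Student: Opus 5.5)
Suppose, for contradiction, that $\pi$ is an invariant law for $Z_t=(X_t,X_{t-1})$, after the affine normalization \Cref{eq:affine-normalization}, whose first marginal is Gaussian, say $N(m,s^2)$. The argument is to derive enough stationary moment identities from \Cref{eq:normalized-recurrence} to contradict the Gaussian ones.

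\textbf{Finite moments and the mean.} Since the first marginal is Gaussian, $X_t$ has finite moments of every order. Both coordinates of $Z_t$ have the same law under invariance, so every mixed moment $\E[X_t^iX_{t-1}^j]$ is finite by Hölder. Taking expectations in \Cref{eq:normalized-recurrence} and using that $\phi_{t+1}$ is independent of $(X_t,X_{t-1})$ gives
\[
m\bigl(1-(1+w-c)+w\bigr)=0,
\]
that is, $cm=0$, so $m=0$. At degree two, $(\E X^2,\E[X_tX_{t-1}],\E X^2)$ solves $(I-T_2)v=\text{forcing}$. The forcing is $\E[(\phi_2-\phi_1)^2]=c^2/6\neq0$, together with cross terms that vanish because $m=0$. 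By \Cref{lem:stability-region}, $\det(I-T_2)=c\Dtwo/6\neq0$ on $\Ctwo$, so $s^2$ and the lag-one covariance are uniquely determined rational functions of $(w,c)$, and $s^2>0$.

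\textbf{Moment equations at degrees 4, 6, 8.} For each even $n$, raise \Cref{eq:normalized-recurrence} to the powers needed to obtain equations for $M_{i,j}=\E[X_t^iX_{t-1}^j]$ with $i+j=n$. Expand $(aX_t-wX_{t-1}+b)^iX_t^j$, where $a=1+w-\phi_1-\phi_2$ and $b=\phi_2-\phi_1$, and take expectations using independence. This yields a linear system $(I-T_n)M^{(n)}=F_n$. Here $T_n$ is the $(n+1)\times(n+1)$ homogeneous operator and $F_n$ involves only moments of lower total degree. The pure moments $M_{n,0}=M_{0,n}$ are fixed by the Gaussian hypothesis to $(n-1)!!\,s^n$, with odd-order moments equal to $0$. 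The remaining $n-1$ mixed unknowns are then over-determined, because the system has $n+1$ equations. Eliminating those unknowns, for instance by Cramer's rule or by expressing compatibility as a rank condition after the paper's Hermite change of basis in which the Gaussian marginal contributions cancel, yields polynomial conditions $Q_4(w,c)=0$, $Q_6(w,c)=0$, $Q_8(w,c)=0$. Invertibility of the relevant minors (or of $I-T_n$) on $\Ctwo$ needs to be checked, for example through a higher-moment analogue of \Cref{lem:stability-region}, or at least away from an explicit exceptional set handled separately. Odd degrees, which involve the odd mixed moments, have to be solved as well, since they feed the forcing at even degrees.

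\textbf{Showing the three conditions have no common admissible zero.} This is the main obstacle. I would take the resultant of the numerators of $Q_4$ and $Q_6$ with respect to $c$, obtaining a univariate polynomial in $w$; the abstract indicates its relevant factor has degree $107$. Every common zero in $\Ctwo$ then lies on this branch. To exclude all of them, I would reduce $Q_8$ modulo the ideal generated by $Q_4$ and $Q_6$ (or equivalently by the branch polynomial together with a parametrization $c=r(w)$ obtained from the subresultant) and show the remainder is a nonzero polynomial coprime to the branch polynomial. The gcd computation would be certified modularly: coprimality modulo a prime $p$ for which the leading coefficients do not vanish implies coprimality over $\Q$. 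I would run this with two primes, $23$ and $1{,}000{,}003$, and cross-check by the raw-moment computation over $\Q$.

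One care point is that spurious factors, such as denominators, $c$, $\Dtwo$, and $1\pm w$, must be removed before taking resultants, and that possible common components of $Q_4$ and $Q_6$ must be ruled out. This can be done by checking that their gcd is constant, again modularly. Together these steps contradict the assumption that the first marginal is Gaussian. Because any limiting marginal inherits the finite-moment stationarity identities, the same argument also excludes Gaussian limits.
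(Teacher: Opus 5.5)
Your overall strategy is the paper's: the stationary moment hierarchy through order eight, one Gaussian compatibility condition per even order, elimination to the degree-$107$ eliminant, and an exact modular exclusion at order eight. Your formulation of the last step is coprimality of the substituted eighth-order numerator with $R$, certified modulo a prime not dividing the leading coefficient of $R$. That is logically equivalent to the paper's branch-by-branch test via \Cref{lem:lifting}.

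But the two steps you leave open are exactly where the paper's argument does real work, and the first remedy you propose cannot work. The operators $I-T_n$ are not invertible on all of $\Ctwo$. At $w=0$ the homogeneous map is $(x,y)\mapsto(ax,x)$, so $\det(I-T_4)=1-\E[a^4]$. Since $\E[a^4]<1$ for $0<c<1/2$, while $\E[a^4]>(\E[a^2])^2=1$ at the boundary point $c=12/7$, this determinant vanishes at an interior point of $\Ctwo$. So there is no ``higher-moment analogue'' of \Cref{lem:stability-region}, and the elimination cannot presuppose invertibility. Your ``exceptional set handled separately'' is precisely the missing content. The paper never needs global invertibility:
\begin{itemize}
\item At order four the rows $k=1,2,3$ are solved with denominators $\mathcal H$ and $1-w$, which are nonzero by \Cref{lem:H-positive}.
\item At order six, $G=0$ comes from the vanishing of the $6\times6$ augmented determinant, which is necessary for solvability without any invertibility hypothesis.
\item Invertibility of the first five sixth-order rows and the first seven eighth-order rows is needed only at the candidate algebraic points. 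There the nonvanishing of $d_6,d_8$ is certified by the same modular lifting argument.
\end{itemize}
The raw route instead uses adjugate identities and separately excludes the singular branches \Cref{eq:sing4,eq:sing6}.

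Second, ``every common zero in $\Ctwo$ then lies on this branch'' does not follow from forming a resultant; every other factor must be disposed of. In the paper's coordinates $\Res_\rho(F,G)$ contains $w^6$. At $w=0$, which is admissible, both $F$ and $G$ drop degree in $\rho$ (to $2$ and $4$), so the resultant vanishes there regardless of common roots, and the specialized pair needs a separate check. The remaining factors need a sign argument on $(-1,1)$. In your raw coordinates $\Res_c(Q_4,Q_6)$ has degree $208$ against $107$ for $R$, so the extra factors must be excluded, for instance by real-root isolation. You also need $\gcd(R,A_*)=1$ so that the subresultant parametrization is defined at every root of $R$.

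Smaller points:
\begin{itemize}
\item Odd-degree moments do not feed the even-degree forcing, because $\E[a^i\eta^k]=0$ for odd $k$ by the $\phi_1\leftrightarrow\phi_2$ swap. So no odd-degree system needs solving.
\item At order $n$, one of the $n+1$ rows is the trivial identity $m_{0,n}=m_{n,0}$, so there is exactly one compatibility condition per order.
\item Your closing claim about limiting marginals lies outside the statement and is unjustified as written. Weak convergence of $\law(X_t)$ gives neither moment identities nor an invariant coupling without an argument such as the Ces\`aro construction in \Cref{prop:stationary-coupling}.
\end{itemize}
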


\section{Stationary coupling, centering, and symmetry}\label{sec:stationary}

For the normalized recurrence, write
\begin{equation}\label{eq:a-delta}
 A=1+w-U-V,
 \qquad D=V-U,
\end{equation}
where $U,V$ are independent $\operatorname{Unif}[0,c]$ variables.  The state transition is
\begin{equation}\label{eq:state-transition}
 (x,y)\longmapsto(Ax-wy+D,x),
\end{equation}
with fresh noise at each step.

Let $(X,Y)$ have an invariant law and let $(U,V)$ be fresh noise independent of $(X,Y)$.  Put
\begin{equation}\label{eq:Xprime}
 X'=AX-wY+D.
\end{equation}
Then
\begin{equation}\label{eq:pair-stationarity}
 (X',X)\stackrel d=(X,Y).
\end{equation}
Assume for contradiction that the common coordinate marginal is Gaussian.  If $m=\E X=\E Y$, then
\[
 m=(1+w-c)m-wm=(1-c)m,
\]
so $m=0$ because $c>0$.

The transition kernel commutes with simultaneous reflection.  More precisely, swapping $U$ and $V$ leaves $A$ unchanged and changes $D$ to $-D$, so the reflected image of an invariant law is invariant.  Averaging an invariant coupling with its reflection preserves its centered Gaussian marginals.  Hence we may assume
\begin{equation}\label{eq:central-symmetry}
 (X,Y)\stackrel d=(-X,-Y).
\end{equation}
If both marginals are Gaussian,  H\"older's inequality proves that every mixed moment of $(X,Y)$ is finite.  \Cref{eq:central-symmetry} then makes all mixed moments of odd total degree vanish.

\subsection{Non-Gaussianity of the $2-$dimensional process}

We assume $\{X_t\}_{t \ge 0}$ has reached its unique, zero-mean, symmetric invariant stationary distribution $\mu$ with marginal variance $\text{Var}(X_t) = \sigma_X^2 < \infty$, and lag-1 autocorrelation $\rho_1 = \text{Corr}(X_t, X_{t+1}) \in (-1, 1)$. 

Suppose also that $\{\phi_{1,t}\}_{t \ge 0}, \{\phi_{2,t}\}_{t \ge 0}$ from \Cref{eq:normalized-recurrence} are mutually independent sequences of i.i.d. random variables with:
\begin{equation}
\mathbb{E}[\phi_{1,t}] = \mathbb{E}[\phi_{2,t}] = \mu_\phi > 0, \quad \text{Var}(\phi_{1,t}) = \text{Var}(\phi_{2,t}) = \sigma_\phi^2 < \infty.
\end{equation}
In particular, for uniform noise $\phi_1, \phi_2 \sim U(0,c)$, the noise parameters are:
\begin{equation}
\mu_\phi = \frac{c}{2}, \quad \sigma_\phi^2 = \frac{c^2}{12}. \label{eq:uniform_moments}
\end{equation}

\begin{proposition}[Universal Quadratic Heteroscedasticity]\label{prop:calcul var cond}
For any noise distribution with variance $\sigma_\phi^2$, the conditional variance $\text{Var}(X_{t+1} \mid X_t = x)$ is strictly quadratic in $x$:
\begin{equation}
\text{Var}(X_{t+1} \mid X_t = x) = 2\sigma_\phi^2 (1 + x^2) + w^2 \text{Var}(X_{t-1} \mid X_t = x). \label{eq:var_quadratic_gen}
\end{equation}
\end{proposition}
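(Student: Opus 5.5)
The plan is to compute the conditional variance directly from the normalized recurrence \Cref{eq:normalized-recurrence}, treating the fresh noise $(\phi_{1,t+1},\phi_{2,t+1})$ as independent of the pair $(X_t,X_{t-1})$. First I will regroup the right-hand side of \Cref{eq:normalized-recurrence} so that the noise multiplies explicit affine functions of $X_t$:
\[
 X_{t+1}=(1+w)X_t-wX_{t-1}-\phi_{1,t+1}(X_t+1)-\phi_{2,t+1}(X_t-1).
\]
Conditionally on $X_t=x$, this reads
\[
 X_{t+1}=(1+w)x-wX_{t-1}-\phi_{1,t+1}(x+1)-\phi_{2,t+1}(x-1).
\]
Here the first term is deterministic, the second depends only on the past, and the last two involve only the fresh noise.

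Next I will take the conditional variance of this sum, which has three ingredients. The deterministic term $(1+w)x$ contributes nothing. The past term contributes $w^2\operatorname{Var}(X_{t-1}\mid X_t=x)$. The noise terms contribute
\[
 \sigma_\phi^2\bigl[(x+1)^2+(x-1)^2\bigr]=2\sigma_\phi^2(1+x^2),
\]
because $\phi_{1,t+1}$ and $\phi_{2,t+1}$ are independent with common variance $\sigma_\phi^2$, and their law is unaffected by conditioning on $X_t$. The identity \Cref{eq:var_quadratic_gen} then follows once I show that the conditional covariances between $X_{t-1}$ and each $\phi_{i,t+1}$, given $X_t$, vanish. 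This holds because $(\phi_{1,t+1},\phi_{2,t+1})$ is independent of $(X_t,X_{t-1})$, which in turn holds because the noise at time $t+1$ is fresh relative to the Markov state $Z_t$. Hence, conditionally on $X_t$, the noise remains independent of $X_{t-1}$, and the conditional expectation factorizes as
\[
 \E[X_{t-1}\phi_{i,t+1}\mid X_t]=\E[X_{t-1}\mid X_t]\,\E\phi_{i,t+1}.
\]

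The only real care point is technical rather than algebraic: the conditional variances must be well defined. Square integrability of the stationary law (recall $\sigma_X^2<\infty$) gives $\E[X_{t-1}^2\mid X_t]<\infty$ almost surely. Accordingly, I will state the identity for $\law(X_t)$-almost every $x$, using regular conditional distributions, and verify the conditional independence claim via the factorization of the joint law of $(X_t,X_{t-1},\phi_{1,t+1},\phi_{2,t+1})$.

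I will also note that the word ``quadratic'' refers to the explicit term $2\sigma_\phi^2(1+x^2)$. The remaining term $w^2\operatorname{Var}(X_{t-1}\mid X_t=x)$ is nonnegative, so in particular
\[
 \operatorname{Var}(X_{t+1}\mid X_t=x)\ge 2\sigma_\phi^2(1+x^2),
\]
which is strictly positive and grows quadratically in $x$. That lower bound is the heteroscedasticity the proposition is after.
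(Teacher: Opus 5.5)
Your proposal is correct and is essentially the paper's argument. The paper additionally conditions on $X_{t-1}$ and applies the law of total variance, whereas you expand the conditional variance of the sum and kill the cross-covariances using the conditional independence of $(\phi_{1,t+1},\phi_{2,t+1})$ and $X_{t-1}$ given $X_t$; this is the same fresh-noise independence fact packaged differently, and your reading of ``strictly quadratic'' as the lower bound $\operatorname{Var}(X_{t+1}\mid X_t=x)\ge 2\sigma_\phi^2(1+x^2)$ matches how the paper uses the proposition afterwards.
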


\begin{proof}
By the Law of Total Variance conditioned on $X_t = x$:
\[
\text{Var}(X_{t+1} \mid X_t = x) = \mathbb{E}\left[\text{Var}(X_{t+1} \mid X_t, X_{t-1}) \mid X_t = x\right] + \text{Var}\left(\mathbb{E}[X_{t+1} \mid X_t, X_{t-1}] \mid X_t = x\right).
\]
Conditional on $(X_t=x, X_{t-1})$, $X_{t+1}$ varies solely through $\xi_{t+1}(x) := \phi_{2,t+1}(1 - X_t) - \Phi_{1,t+1}(1 + X_t). \label{eq:noise_def_gen}$. Since $\phi_{1,t}$ and $\phi_{2,t}$ are independent of $X_{t-1}$ with variance $\sigma_\phi^2$:
\[
\text{Var}(\xi_t(x) \mid X_t=x, X_{t-1}) = (1-x)^2 \sigma_\phi^2 + (1+x)^2 \sigma_\phi^2 = 2\sigma_\phi^2(1 + x^2).
\]
Because $\mathbb{E}[X_{t+1} \mid X_t=x, X_{t-1}] = (1+w-2\mu_\phi)x - w X_{t-1}$, taking the variance over $X_{t-1}$ given $X_t=x$ yields $w^2 \text{Var}(X_{t-1} \mid X_t = x)$. Adding both terms gives \eqref{eq:var_quadratic_gen}.
\end{proof}

\begin{theorem}[Universal Non-Gaussianity]
The joint stationary position distribution $(X_{t+1}, X_t)$ in stagnant PSO is non-Gaussian for ANY stochastic noise process with non-zero variance $\sigma_\phi^2 > 0$.
\end{theorem}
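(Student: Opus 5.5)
The plan is to argue by contradiction using \Cref{prop:calcul var cond}. The key point is that for a jointly Gaussian pair, conditional variances do not depend on the conditioning value. The proposition, however, forces a genuinely quadratic dependence on $x$ whenever $\sigma_\phi^2>0$.

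\textbf{Step 1: the stationary marginal is nondegenerate.} Suppose the stationary pair $(X_{t+1},X_t)$ is jointly Gaussian. By the centering and symmetry reductions of \Cref{sec:stationary} it is centered, with common marginal variance $\sigma_X^2$ and lag-one correlation $\rho_1$. I first check that $\sigma_X^2>0$. If $X_t\equiv 0$, then $X_{t-1}\equiv0$ by stationarity, so $X_{t+1}=\phi_{2,t+1}-\phi_{1,t+1}$, which has variance $2\sigma_\phi^2>0$. This contradicts stationarity. Since $|\rho_1|<1$, the Gaussian pair has a nondegenerate density and the marginal of $X_t$ has full support on $\R$.

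\textbf{Step 2: both conditional variances are constant.} Gaussian conditioning gives
\[
\mathrm{Var}(X_{t+1}\mid X_t=x)=(1-\rho_1^2)\sigma_X^2 \quad\text{for a.e. } x.
\]
Stationarity gives $(X_t,X_{t-1})\stackrel d=(X_{t+1},X_t)$, so the pair $(X_{t-1},X_t)$ is also centered Gaussian with the same symmetric covariance matrix. Hence $\mathrm{Var}(X_{t-1}\mid X_t=x)=(1-\rho_1^2)\sigma_X^2$ as well. Only the pair laws are used here, never the law of the triple.

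\textbf{Step 3: compare with \Cref{prop:calcul var cond}.} Substituting into \eqref{eq:var_quadratic_gen} yields
\[
(1-w^2)(1-\rho_1^2)\sigma_X^2=2\sigma_\phi^2(1+x^2)\quad\text{for a.e. } x\in\R.
\]
The left side is constant. The right side is a nonconstant polynomial in $x$ because $\sigma_\phi^2>0$. Since the identity holds on a set of full Gaussian measure, which is dense in $\R$, it holds for two distinct values of $|x|$, and this is a contradiction.

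\textbf{Main obstacle: rigor of the conditional-variance identity.} The proposition's derivation uses that the fresh noise $(\phi_{1,t+1},\phi_{2,t+1})$ is independent of $(X_t,X_{t-1})$. Conditional variances are also only defined almost everywhere. To avoid subtleties with regular conditional laws, I would restate Step 3 in integrated form. For every bounded measurable $g$, test \eqref{eq:var_quadratic_gen} against $g(X_t)$: compute $\E[(X_{t+1}-\E[X_{t+1}\mid X_t])^2 g(X_t)]$ once via the recursion and once via the Gaussian formula. With $g(x)=x^2$ and $g\equiv1$ this reduces to the single moment identity
\[
\E\bigl[(X_{t+1}-\rho_1X_t)^2X_t^2\bigr]=(1-\rho_1^2)\sigma_X^4 .
\]
This identity holds under Gaussianity. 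The recursion instead gives the value $(1-\rho_1^2)\sigma_X^4+2\sigma_\phi^2\,\mathrm{Var}(X_t^2)$ for the same quantity, where $\mathrm{Var}(X_t^2)=2\sigma_X^4>0$. All the required moments are finite by Gaussianity and the finite noise variance. This finite moment computation is the step I would write out carefully, and the contradiction follows from it directly.
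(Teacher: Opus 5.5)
Your proof is correct and is essentially the paper's argument: both set the quadratic heteroscedasticity of Proposition~\ref{prop:calcul var cond} against the constant conditional variance of a bivariate normal. The difference is that you use stationarity to evaluate $w^2\mathrm{Var}(X_{t-1}\mid X_t=x)$ exactly, and your integrated check is right, with excess $2\sigma_\phi^2\mathrm{Var}(X_t^2)=4\sigma_\phi^2\sigma_X^4$. The paper instead bounds this term below by $0$ and notes that a constant cannot dominate $2\sigma_\phi^2(1+x^2)$ on all of $\R$.
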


\begin{proof}
A necessary property of any Bivariate Normal distribution $(X, Y)$ is homoscedasticity of conditional variances, i.e., $\text{Var}(Y \mid X = x) = \sigma_Y^2 (1 - \rho^2) = \text{constant}$ for all $x$. 
From Proposition \ref{prop:calcul var cond}, the conditional variance $v_1(x) = \text{Var}(X_{t+1} \mid X_t = x)$ includes the explicit state-dependent term $2\sigma_\phi^2(1 + x^2)$. Since $\text{Var}(X_{t-1} \mid X_t = x) \ge 0$, we have:
\[
\text{Var}(X_{t+1} \mid X_t = x) \ge 2\sigma_\phi^2 (1 + x^2),
\]
which is strictly quadratic and non-constant in $x$ for all $\sigma_\phi^2 > 0$. Thus, $(X_{t+1}, X_t)$ violates conditional homoscedasticity and cannot be Bivariate Normal.
\end{proof}

\begin{remark}[Scope of Distributional Elimination via Heteroscedasticity]
We emphasize that conditional homoscedasticity ($\text{Var}(Y \mid X = x) = \text{constant}$) is a necessary property not only of the Bivariate Normal distribution, but of a broad class of bivariate models. Consequently, quadratic heteroscedasticity serves as a restrictive structural signature that narrows the valid candidate limiting distributions for PSO.
\end{remark}

\section{Second moments and the effective correlation}\label{sec:second}

Set
\[
 q=\E X^2=\E Y^2,
 \qquad r=\E[XY],
 \qquad \alpha=\E A=1+w-c.
\]
The elementary noise moments are
\begin{equation}\label{eq:noise-second}
 \E A^2=\alpha^2+\frac{c^2}{6},
 \qquad
 \E D^2=\frac{c^2}{6},
 \qquad
 \E[AD]=0.
\end{equation}
Taking the product of \Cref{eq:Xprime} with $X$ and then squaring \Cref{eq:Xprime} gives
\begin{align}
 r&=\alpha q-wr,\label{eq:cov-eq}\\
 q&=\left(\alpha^2+\frac{c^2}{6}+w^2\right)q
      -2w\alpha r+\frac{c^2}{6}.\label{eq:var-eq}
\end{align}
The case $q=0$ contradicts \Cref{eq:var-eq}, so $q>0$.  Define
\begin{equation}\label{eq:rho-def}
 \rho:=\frac rq.
\end{equation}
Solving \Cref{eq:cov-eq,eq:var-eq} yields
\begin{equation}\label{eq:rho-q}
 \rho=1-\frac{c}{1+w},
 \qquad
 q=\frac{c(1+w)}{\Dtwo(w,c)}.
\end{equation}
Thus $c=(1+w)(1-\rho)$, and \Cref{eq:C2} is equivalent to
\begin{equation}\label{eq:rho-domain}
 -1<w<1,
 \qquad
 \frac{7w-5}{7-5w}<\rho<1.
\end{equation}
If
\begin{equation}\label{eq:Qrho}
 \mathcal Q(w,\rho):=5-7w+(7-5w)\rho,
\end{equation}
then
\begin{equation}\label{eq:q-rho}
 q=\frac{(1+w)(1-\rho)}{\mathcal Q(w,\rho)},
\end{equation}
and $\mathcal Q>0$ on \Cref{eq:rho-domain}.

\section{The stationary Hermite hierarchy}\label{sec:hermite}

Normalize the stationary pair by
\begin{equation}\label{eq:ZW}
 Z=\frac{X}{\sqrt q},
 \qquad W=\frac{Y}{\sqrt q}.
\end{equation}
Both marginals are standard normal and $\E[ZW]=\rho$.  Let $H_n$ denote the probabilists' Hermite polynomial,
\[
 H_n(x)=(-1)^ne^{x^2/2}\frac{\dd^n}{\dd x^n}e^{-x^2/2},
\]
and define
\begin{equation}\label{eq:Mab}
 M_{ab}:=\E[H_a(Z)H_b(W)].
\end{equation}
Then
\begin{equation}\label{eq:Hermite-marginals}
 M_{a0}=M_{0a}=0\quad(a\ge1),
 \qquad M_{11}=\rho,
 \qquad M_{ab}=0\quad(a+b\text{ odd}).
\end{equation}
For raw normalized moments $R_{ab}=\E[Z^aW^b]$, the identity
\begin{equation}\label{eq:raw-Hermite}
 x^m=\sum_{j=0}^{\lfloor m/2\rfloor}
 \frac{m!}{2^j j!(m-2j)!}H_{m-2j}(x)
\end{equation}
expresses each $R_{ab}$ linearly in the $M_{ij}$.

Write
\begin{equation}\label{eq:nuij}
 \nu_{ij}:=\E\!\left[A^i\left(\frac D{\sqrt q}\right)^j\right]
 =\frac1{c^2q^{j/2}}\int_0^c\!\int_0^c
 (1+w-u-v)^i(v-u)^j\,\dd u\,\dd v.
\end{equation}
Interchanging $u$ and $v$ gives $\nu_{ij}=0$ for odd $j$.  The normalized update is
\begin{equation}\label{eq:Zprime}
 Z'=AZ-wW+\frac D{\sqrt q},
\end{equation}
and stationarity says $(Z',Z)\stackrel d=(Z,W)$.  Therefore, for every even $n$ and $0\le k<n$,
\begin{equation}\label{eq:master-Hermite}
 \sum_{i+j+\ell=n-k}
 \binom{n-k}{i,j,\ell}(-w)^j\nu_{i\ell}R_{i+k,j}
 -R_{n-k,k}=0.
\end{equation}
At fixed total degree $n$, these equations are linear in the cross-Hermite moments $M_{a,n-a}$, $1\le a\le n-1$.  Equation \Cref{eq:master-Hermite} is the sole algebraic input to the primary certificate.

\section{Fourth-order compatibility}\label{sec:fourth}

Put
\[
 u=M_{31},\qquad v=M_{22}.
\]
The equation with $(n,k)=(4,3)$ gives $M_{13}=-wu$.  The equations with $k=1,2$ determine
\begin{align}
 u&=\frac{2\rho(\rho-1)^2(1+w)(w^2-w+1)}{\mathcal H(w,\rho)},\label{eq:u-sol}\\
 v&=2\rho^2+\frac{(1+w)(1-\rho)^2}{3(1-w)}
 -\frac{2w\rho}{1-w}u,\label{eq:v-sol}
\end{align}
where
\begin{align}
\mathcal H(w,\rho)={}&5\rho^2w^3+7\rho^2w+2\rho w^3-2\rho w\notag\\
&+2w^4-5w^3+4w^2-3w+2.\label{eq:Hpoly}
\end{align}

\begin{lemma}\label{lem:H-positive}
The polynomial $\mathcal H$ is strictly positive throughout \Cref{eq:rho-domain}.
\end{lemma}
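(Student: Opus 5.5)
The plan is to treat $\mathcal H$ as a quadratic in $\rho$ with coefficients depending on $w$, and to split according to the sign of $w$, since the leading coefficient changes sign there. Reading off \Cref{eq:Hpoly},
\[
 \mathcal H(w,\rho)=w(5w^2+7)\rho^2-2w(1-w^2)\rho+(1-w)\bigl(2-w+3w^2-2w^3\bigr),
\]
where the constant term factors as displayed; expanding the product recovers $2w^4-5w^3+4w^2-3w+2$. On $(-1,1)$ we have $2-w\ge1$ and $w^2(3-2w)\ge0$, so the constant term is strictly positive. This settles $w=0$ immediately, since then $\mathcal H=2$.

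\emph{Case $0<w<1$.} Here the leading coefficient is positive, and I would show that the discriminant in $\rho$ is negative, so that $\mathcal H>0$ for \emph{all} real $\rho$. The discriminant is
\[
 4w(1-w)\Bigl[w(1-w)(1+w)^2-(5w^2+7)\bigl(2-w+3w^2-2w^3\bigr)\Bigr].
\]
On $(0,1)$ the first term in the bracket is at most $4w(1-w)\le1$. The second term is at least $7\cdot1=7$, because $2-w+3w^2-2w^3\ge 2-w\ge1$. Hence the bracket is negative and the discriminant is negative.

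\emph{Case $-1<w<0$.} Now the leading coefficient $w(5w^2+7)$ is negative, so $\rho\mapsto\mathcal H(w,\rho)$ is concave. It therefore suffices to check that $\mathcal H$ is positive at the two endpoints of the closed interval $[\rho_{\min},1]$, where $\rho_{\min}=(7w-5)/(7-5w)$ comes from \Cref{eq:rho-domain}.

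At $\rho=1$ a direct expansion gives
\[
 \mathcal H(w,1)=2(w^4+w^3+2w^2+w+1)=2(w^2+1)(w^2+w+1)>0 .
\]
At $\rho=\rho_{\min}$ I would clear denominators and study $(7-5w)^2\mathcal H(w,\rho_{\min})$, which is a polynomial of degree at most six in $w$. I would then factor it, expecting factors such as $(1-w)$ or $(1+w)$ to split off by analogy with the factorisations in \Cref{lem:stability-region}. The remaining low-degree factor would be shown to have constant sign on $[-1,0]$ by crude term-by-term bounds of the kind used in the proof of \Cref{lem:stability-region}, or by a Sturm-sequence check if necessary.

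The main obstacle I expect is this last step. The endpoint polynomial at $\rho_{\min}$ is the only place where an unstructured polynomial inequality has to be certified. If its cofactor does not factor nicely, I would split $[-1,0]$ into a few subintervals and bound the cofactor on each using monotonicity of its individual terms.
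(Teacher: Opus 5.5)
Your argument follows the paper's proof. The paper uses the same split on the sign of $w$, a negative discriminant for $0<w<1$, and concavity plus the two endpoint values for $-1<w<0$. It writes $\mathcal H$ in $\kappa=1-\rho$, but that change is purely cosmetic. Your discriminant expands to exactly the paper's $-8w(w-1)(w^2+1)(5w^3-8w^2+4w-7)$, and your crude bound on the bracket is a valid substitute for the paper's bound on the cubic factor.

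What you leave undone is the only nontrivial step, so as written the proof is incomplete. Your guess about that step is also wrong. One has $(7-5w)^2\mathcal H(w,\rho_{\min})=2N(w)$ with $N(w)=25w^6-45w^5+173w^4-66w^3-91w^2-21w+49$. Since $N(1)=24$ and $N(-1)=288$, no factor $1\pm w$ splits off. You do not need subdivision or a Sturm sequence, however. Putting $w=-x$ with $0<x<1$ gives $N(-x)=25x^6+45x^5+66x^3+21x+(173x^4-91x^2+49)$. The only negative coefficient sits inside the bracket, which is a quadratic in $x^2$ with discriminant $91^2-4\cdot173\cdot49<0$, so $N(-x)>0$. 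Adding this line completes your proof.
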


\begin{proof}
Set $\kappa=1-\rho$.  Then
\[
 0<\kappa<\frac{12(1-w)}{7-5w}
\]
and
\begin{equation}\label{eq:H-kappa}
 \mathcal H
 =w(5w^2+7)\kappa^2-12w(w^2+1)\kappa
 +2(w^2+1)(w^2+w+1).
\end{equation}
For $0<w<1$, this is a quadratic with positive leading coefficient and discriminant
\[
 -8w(w-1)(w^2+1)(5w^3-8w^2+4w-7)<0.
\]
Indeed, $5w^3-8w^2+4w-7\le-3w^2+4w-7<0$.  At $w=0$, $\mathcal H=2$.

For $-1<w<0$, \Cref{eq:H-kappa} is concave in $\kappa$, so its minimum on the closed endpoint interval occurs at an endpoint.  At $\kappa=0$ it equals $2(w^2+1)(w^2+w+1)>0$.  At the other endpoint it equals
\[
 \frac{2N(w)}{(5w-7)^2},
\]
where
\[
 N(w)=25w^6-45w^5+173w^4-66w^3-91w^2-21w+49.
\]
Writing $w=-x$, $0<x<1$, gives
\[
 N(-x)=25x^6+45x^5+66x^3+21x+(173x^4-91x^2+49)>0,
\]
because the final quadratic in $x^2$ has negative discriminant.
\end{proof}

The unused $k=0$ equation becomes
\begin{equation}\label{eq:fourth-residual}
 \frac{(\rho-1)^2(1+w)^2}{15\mathcal H(w,\rho)}F(w,\rho)=0,
\end{equation}
where
\begin{align}
F(w,\rho)={}&
\rho^4(5w^5+10w^4-109w^3-12w^2+30w)\notag\\
&+\rho^3(-164w^5+128w^4-224w^3+240w^2-108w)\notag\\
&+\rho^2(26w^6-30w^5+240w^4-48w^3+154w^2-138w+60)\notag\\
&+\rho(-28w^6+148w^5-256w^4+244w^3-284w^2+144w-48)\notag\\
&-10w^6+53w^5-134w^4+161w^3-110w^2+84w-24.
\label{eq:Fpoly}
\end{align}
All prefactors in \Cref{eq:fourth-residual} are nonzero in the admissible region, so Gaussianity forces
\begin{equation}\label{eq:Fzero}
 F(w,\rho)=0.
\end{equation}

\section{Sixth-order compatibility and elimination of the correlation}\label{sec:sixth}

The calculations below use a standard elimination device called a \emph{resultant}.  For two polynomials in the variable $\rho$, with coefficients depending on $w$, the resultant is a polynomial in $w$ alone.  Whenever the two original polynomials have a common value of $\rho$, their resultant must vanish.  We need only this one-way implication.  The computation is exact and amounts to determinant and polynomial-arithmetic operations on integer coefficients.

Introduce
\begin{equation}\label{eq:p6vector}
 \vct p_6=(M_{51},M_{42},M_{33},M_{24},M_{15})^{\mathsf T}.
\end{equation}
After substituting the fourth-order solution, the six equations \Cref{eq:master-Hermite} with $n=6$ form a linear system
\begin{equation}\label{eq:A6-system}
 A_6(w,\rho)\vct p_6=b_6(w,\rho).
\end{equation}
Multiplying rows $k=0,1,\ldots,5$, respectively, by
\begin{equation}\label{eq:scale6}
 42\mathcal H,\quad3\mathcal H,\quad15\mathcal H,
 \quad2\mathcal H,\quad\mathcal H,\quad1
\end{equation}
produces a polynomial system $\widehat A_6\vct p_6=\widehat b_6$ over $\Z[w,\rho]$.  Define $G\in\Z[w,\rho]$ by
\begin{equation}\label{eq:Gdef}
 \det[\widehat A_6\mid\widehat b_6]
 =3(\rho-1)^2(1+w)^5\mathcal H^4G(w,\rho).
\end{equation}
The generated polynomial has
\begin{equation}\label{eq:Gmetadata}
 \deg_wG=20,\qquad \deg_\rho G=12,
 \qquad \#\operatorname{supp}(G)=241.
\end{equation}
A solvable system with five unknowns has zero $6\times6$ augmented determinant; hence Gaussianity forces
\begin{equation}\label{eq:Gzero}
 G(w,\rho)=0.
\end{equation}

Eliminating $\rho$ gives the exact factorization
\begin{align}
\Res_\rho(F,G)
={}&1375941427200\,w^6(w-1)^5(w+1)^4(w^2+1)^3\notag\\
&\times(w^2-w+1)^2(w^2+w+1)^4
(2w^3-3w^2+w-2)^2R(w),
\label{eq:resultant-factorization}
\end{align}
where $R\in\Z[w]$ is primitive, has positive leading coefficient, and has degree $107$.

None of the displayed factors except $w^6$ can vanish for $-1<w<1$.  The factors $w\pm1$ are excluded, the three quadratic factors have no real roots, and
\[
 2w^3-3w^2+w-2<0\qquad(-1<w<1).
\]
For $w<0$ every term is negative; for $0\le w<1$ the expression is at most $-w^2+w-2<0$.  The case $w=0$ is impossible as well, because
\begin{align*}
 F(0,\rho)&=12(5\rho^2-4\rho-2),\\
 G(0,\rho)&=-720(113\rho^4-103\rho^3+62\rho^2-63\rho-30),
\end{align*}
and the resultant of the parenthesized polynomials is $68338\ne0$.  Therefore every admissible common zero of $F$ and $G$ satisfies
\begin{equation}\label{eq:Rzero}
 R(w)=0.
\end{equation}

Continuing the same elimination calculation produces a short remainder sequence with degrees
\begin{equation}\label{eq:subres-degrees}
 12,4,3,2,1,0.
\end{equation}
Its degree-one member has the form
\begin{equation}\label{eq:S1}
 S_1(w,\rho)=A_*(w)\rho+B_*(w).
\end{equation}
Exact arithmetic gives
\begin{equation}\label{eq:gcd-RA}
 \gcd_{\Q[w]}(R,A_*)=1.
\end{equation}
Every common zero of $F$ and $G$ satisfies this degree-one relation.  Consequently any admissible candidate satisfies
\begin{equation}\label{eq:rho-branch}
 \rho=-\frac{B_*(w)}{A_*(w)}.
\end{equation}
The phrase ``degree-$107$ branch'' will refer to \Cref{eq:Rzero,eq:rho-branch}; more precisely, $R$ is the univariate eliminant and the degree-one relation recovers $\rho$.

\section{The eighth-order contradiction}\label{sec:eighth}

Let
\begin{equation}\label{eq:h8vector}
 \vct h_8=(M_{71},M_{62},M_{53},M_{44},M_{35},M_{26},M_{17})^{\mathsf T}.
\end{equation}
The eight equations \Cref{eq:master-Hermite} with $n=8$ have the form
\begin{equation}\label{eq:A8-system}
 A_8(w,\rho)\vct h_8=b_8(w,\rho,\vct p_6).
\end{equation}
After multiplying the eight rows by
\begin{equation}\label{eq:scale8}
 15\mathcal H,\quad12\mathcal H,\quad42\mathcal H,
 \quad3\mathcal H,\quad5\mathcal H,
 \quad2\mathcal H,\quad\mathcal H,\quad1,
\end{equation}
all coefficients are integer polynomials in $(w,\rho)$; the right-hand side remains affine in the five sixth-order unknowns.

\subsection{Why reduce the equations modulo a prime?}

At this stage the formulas are exact but large.  We therefore perform the final contradiction using ordinary integer arithmetic with all numbers replaced by their remainders modulo a prime $p$.  This is not an approximation.  Addition, multiplication, division by a nonzero number, and Gaussian elimination are exact in the finite arithmetic system.

The logical point is elementary.  Suppose an integer polynomial identity held at an algebraic number in the original problem.  The minimal polynomial of that number would divide the polynomial expressing the identity.  Reducing the coefficients modulo any prime that does not destroy the leading terms preserves that divisibility.  Therefore an identity that is required over the real numbers must still be visible after reduction modulo every suitable prime.  Finding one suitable prime for which the required identity fails is enough to rule it out over the real numbers.

\begin{lemma}[Exact reduction test]\label{lem:lifting}
Let $\alpha$ be algebraic over $\Q$, let $T\in\Z[x]$ be its primitive minimal polynomial, and let $N\in\Z[x]$.  If $N(\alpha)=0$, then $T$ divides $N$ over $\Z[x]$.  If a prime $p$ does not divide the leading coefficient of $T$, each irreducible factor of $T$ after coefficient reduction modulo $p$ also divides the reduction of $N$.
\end{lemma}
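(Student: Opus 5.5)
The plan is to prove the two assertions of \Cref{lem:lifting} in turn. The first is Gauss's lemma; the second is reduction of a divisibility relation modulo $p$.

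For the first claim, let $m_\alpha\in\Q[x]$ be the monic minimal polynomial of $\alpha$. Since $N(\alpha)=0$ and $m_\alpha$ generates the ideal of $\Q[x]$ of polynomials vanishing at $\alpha$, we get $N=m_\alpha g$ for some $g\in\Q[x]$. Write $T=\lambda m_\alpha$, where $\lambda\in\Q^\times$ is chosen so that $T\in\Z[x]$ is primitive. Then $N=T\cdot h$ with $h=\lambda^{-1}g\in\Q[x]$.

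We next show $h\in\Z[x]$. Write $h=(a/b)h_0$ with $h_0\in\Z[x]$ primitive and $\gcd(a,b)=1$. By Gauss's lemma $Th_0$ is primitive, so $bN=a\,Th_0$ forces $b$ to divide the content of $aTh_0$, which is $|a|$. Hence $b=\pm1$, so $h\in\Z[x]$ and $T\mid N$ in $\Z[x]$.

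For the second claim, write $N=Th$ with $h\in\Z[x]$. Reduction modulo $p$ is a ring homomorphism $\Z[x]\to\mathbb F_p[x]$, so $\bar N=\bar T\,\bar h$ in $\mathbb F_p[x]$. Because $p$ does not divide the leading coefficient of $T$, $\bar T$ has the same degree as $T$ and in particular is nonzero. Any irreducible factor $\pi$ of $\bar T$ in $\mathbb F_p[x]$ therefore divides $\bar T$, and hence divides $\bar N$.

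The one subtle point is the content argument placing $h$ in $\Z[x]$ rather than only in $\Q[x]$. Without it, reduction modulo $p$ of $h$ might not be defined, because $p$ could divide a denominator of $h$. The leading-coefficient hypothesis is what keeps $\bar T$ nondegenerate, so that its irreducible factors are meaningful witnesses.

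In the paper's application, $\bar N\not\equiv0$ together with a factor of $\bar T$ that does not divide $\bar N$ yields the contradiction. If $\bar N$ happens to be zero, the statement holds trivially.
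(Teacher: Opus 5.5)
Your proof is correct and follows the paper's route: minimality gives $T\mid N$ in $\Q[x]$, Gauss's lemma upgrades this to $\Z[x]$ (you spell this out via the content argument $bN=a\,Th_0$), and reducing $N=Th$ modulo $p$ gives $\bar T\mid\bar N$. Two minor nits: dispose of the case $h=0$ before writing $h=(a/b)h_0$. Also, since $T$ is primitive, $\bar T\neq0$ for every prime, so the leading-coefficient hypothesis is needed only to preserve $\deg\bar T$ (which the application uses), not for the divisibility itself.
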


\begin{proof}
Minimality gives divisibility over $\Q[x]$, and Gauss's lemma upgrades it to divisibility in $\Z[x]$ because $T$ is primitive.  Reducing the resulting product identity modulo $p$ preserves degrees when the leading coefficient of $T$ remains nonzero.  Every factor of the reduced $T$ must therefore divide the reduced $N$.
\end{proof}

A prime is called \emph{suitable for the certificate} when the degree-$107$ polynomial keeps its degree, has no repeated factor after reduction, the coefficients that must be divided by remain nonzero, the relevant linear systems remain nonsingular, and the final eighth-order residual can be tested on every factor branch.

\subsection{The primes 23 and 1,000,003}

The primary certificate uses $p=23$.  Reduction of $R$ modulo $23$ is square-free, its leading coefficient is $7$, and it factors into irreducible pieces of degrees
\begin{equation}\label{eq:p23-degrees}
 1,\ 2,\ 3,\ 6,\ 10,\ 30,\ 55.
\end{equation}
For each factor, the program represents $w$ by the residue class of the indeterminate, obtains $\rho$ from \Cref{eq:rho-branch}, solves the first five sixth-order equations, checks the sixth, solves the first seven eighth-order equations, and checks the eighth.  Every sixth-order residual is zero, while every eighth-order residual is nonzero.

As an independent modular cross-check of the same exact calculation, the certificate is rerun with $p=1{,}000{,}003$.  This prime gives factor degrees
\begin{equation}\label{eq:p1000003-degrees}
 1,\ 3,\ 9,\ 11,\ 25,\ 58
\end{equation}
and the same pass/fail pattern.  The larger prime has no special mathematical status.  It was retained because it changes the modular factorization substantially and therefore provides a useful cross-check.  The small prime makes the arithmetic easier to inspect; the large prime makes it less plausible that an unnoticed prime-specific degeneracy survived.  Either successful run is sufficient for the proof.

\begin{proposition}[Exact modular certificate]\label{prop:finite-certificate}
For both $p=23$ and $p=1{,}000{,}003$ the following statements hold after reduction modulo $p$.
\begin{enumerate}[label=\textup{(\roman*)}]
\item The degree-$107$ polynomial $R$ retains degree $107$ and is square-free.
\item On every irreducible factor branch of $R$, $A_*$ is nonzero, so \Cref{eq:rho-branch} is defined and both $F=0$ and $G=0$ hold.
\item The first five sixth-order rows form an invertible system; their solution satisfies the sixth row.
\item The first seven eighth-order rows form an invertible system; their solution fails the eighth row.
\end{enumerate}
\end{proposition}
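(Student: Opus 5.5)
This proposition is a finite computational statement, so the proof is a deterministic exact computation in $\mathbb F_p$ together with a checkable log; I would organize it so that each item (i)--(iv) corresponds to one verifiable step. First, construct $F$, $G$, and $R$ over $\Z$ as in \Cref{eq:Fpoly,eq:Gdef,eq:resultant-factorization}, together with $A_*,B_*$ from the remainder sequence \Cref{eq:S1}. Then reduce all coefficients modulo $p$.

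For (i), check that $p\nmid\operatorname{lc}(R)$; for $p=23$ the reduced leading coefficient is $7$. Then compute $\gcd(\bar R,\bar R')$ in $\mathbb F_p[w]$ and verify that it is $1$. Next, factor $\bar R$ by distinct-degree and equal-degree factorization (Berlekamp or Cantor--Zassenhaus) and record the degree patterns \Cref{eq:p23-degrees,eq:p1000003-degrees}. Since the factors are distinct irreducibles, each quotient $K_i=\mathbb F_p[w]/(f_i)$ is a finite field. Thus all subsequent linear algebra is ordinary Gaussian elimination over a field, and ``nonzero'' means nonzero as a residue class.

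For (ii), on each branch $K_i$ verify $\bar A_*\ne0$. Because $\gcd_{\Q[w]}(R,A_*)=1$ by \Cref{eq:gcd-RA}, this should hold for all but finitely many $p$, but it must be checked explicitly for the chosen $p$. Set $\rho=-\bar B_*/\bar A_*$ and evaluate $\bar F$ and $\bar G$ at $(w,\rho)$ in $K_i$, confirming that both vanish. One also checks that $\mathcal H$ and the prefactor $(\rho-1)(1+w)$ are invertible in $K_i$, so the row scalings \Cref{eq:scale6,eq:scale8} and the fourth-order formulas \Cref{eq:u-sol,eq:v-sol} make sense there. Substituting these formulas fixes $M_{31},M_{22},M_{13}$.

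For (iii) and (iv), assemble $\widehat A_6$ and $\widehat b_6$ from \Cref{eq:master-Hermite} with the $\nu_{ij}$ of \Cref{eq:nuij}. The $q^{j/2}$ factors enter only through even $j$, so they are rational in $(w,\rho)$ via \Cref{eq:q-rho}, with denominator $\mathcal Q$, which must also be checked to be invertible in $K_i$. Take the first five rows, compute their determinant in $K_i$, verify that it is nonzero, solve for $\vct p_6$, and confirm that the sixth row residual is $0$. This is consistent with $\bar G=0$, which serves as an internal check. Then build the $n=8$ system \Cref{eq:A8-system} with $\vct p_6$ substituted, verify that the leading $7\times7$ minor is invertible, solve for $\vct h_8$, and show that the eighth-row residual is a nonzero element of $K_i$. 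Repeat all of this for every factor and for both primes.

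The main obstacle is not conceptual but lies in guaranteeing that the reduced objects are faithful reductions of the true rational objects. This means the implementation of \Cref{eq:master-Hermite}, the Hermite--raw conversion \Cref{eq:raw-Hermite}, and the integer scalings must agree exactly with the paper's definitions, and no denominator may vanish modulo $p$. To handle this, I would cross-validate in two ways. First, check that the regenerated $\bar F$ and $\bar G$ coincide with reductions of the displayed integer polynomials. Second, check that the sixth-order residual vanishes on every branch, which a mis-implemented system would be unlikely to reproduce. The independent raw-moment pipeline of Section~\ref{sec:raw-bridge} then serves as a further audit.
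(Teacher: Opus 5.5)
Your proposal is correct and follows the same route as the paper. The paper proves this proposition by exactly such a computation in each residue field $\mathbb F_p[w]/(f_i)$, with $f_i$ running over the irreducible factors of $\bar R$; it delegates the work to its supplied programs and logs, and your step-by-step account of (i)--(iv) describes that certificate faithfully. One minor slip: $\mathcal Q$ is not a denominator of $\nu_{ij}$, because $q=c/\mathcal Q$ and $\E[A^iD^j]$ carries the factor $c^j$, so for even $j$ one gets $\nu_{ij}=c^{j/2}\mathcal Q^{j/2}$ times a polynomial in $(w,c)$, and your extra invertibility check on $\mathcal Q$ is unnecessary.
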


\begin{proof}
The programs supplied as Online Resource~2 construct all equations from \Cref{eq:master-Hermite} and verify these four statements using exact integer and finite arithmetic.  The runs for the two primes are separate, and each terminates with \texttt{CERTIFICATE VERIFIED}.  The computational supplement describes every step and records the factor degrees, matrix checks, expected output, software versions, and running times.
\end{proof}

\begin{proof}[Proof of \Cref{thm:main}]
It suffices to prove the normalized statement.  Suppose an invariant coupling had a Gaussian coordinate marginal.  Sections~\ref{sec:stationary}--\ref{sec:fourth} would force $F(w,\rho)=0$, and \Cref{sec:sixth} would force $G(w,\rho)=0$.  Hence $w\ne0$, $R(w)=0$, and $\rho=-B_*(w)/A_*(w)$.

Let $T$ be the primitive minimal polynomial of the real algebraic number $w$.  Then $T$ divides $R$.  Use the primary prime $p=23$.  Since the leading coefficient of $R$ is nonzero modulo $23$, the same is true for $T$, and each factor of the reduction of $T$ occurs among the seven factor branches tested in Proposition \ref{prop:finite-certificate}.

After substituting \Cref{eq:rho-branch}, let $d_6(w)$ and $d_8(w)$ be the cleared determinant polynomials for the first five sixth-order rows and the first seven eighth-order rows.  If either determinant vanished at the candidate $w$, Lemma \ref{lem:lifting} would force its reduction to vanish on the corresponding factor branch.  This contradicts the exact matrix checks.  Thus the sixth- and eighth-order unknowns are uniquely determined.

Let $N_8(w)$ be the cleared numerator of the residual in the final eighth-order equation.  Stationarity would require $N_8(w)=0$.  Lemma~\ref{lem:lifting} would then force the reduced residual to be zero on the corresponding factor branch, while Proposition \ref{prop:finite-certificate}(iv) finds it nonzero on every branch.  This contradiction proves that the normalized marginal is not Gaussian.  The inverse affine transformation \Cref{eq:affine-normalization} gives the result. \qedhere
\end{proof}

We end this section with a simple, still useful consequence of the main Theorem.  

The following elementary compactness argument is useful because convergence of $X_t$ alone does not automatically imply convergence of $(X_t,X_{t-1})$.  In addition, the following Corollary gives something more:  under convergence of the law of $X_t$ we have that for large $t$, $X_t$ can not be Gaussian.  

\begin{corollary}[Invariant coupling from marginal convergence]\label{prop:stationary-coupling}
If $\law(X_t)\Rightarrow\mu$, then there is an invariant law $\nu$ for \Cref{eq:state-transition} whose two coordinate marginals are both $\mu$ which is not Gaussian.  In addition, for all large $t$, $X_t$ can not be Gaussian.  
\end{corollary}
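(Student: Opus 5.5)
We assume $\law(X_t)\Rightarrow\mu$ and argue by tightness of the pairs followed by a Ces\`aro averaging of the pair laws. Since $\law(X_t)$ converges, the family $\{\law(X_t)\}_t$ is tight. Each pair $(X_t,X_{t-1})$ has coordinates from this tight family, so the pair laws $\nu_t:=\law(X_t,X_{t-1})$ are tight on $\R^2$.

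A subsequential limit of $\nu_t$ need not be invariant. We therefore take Ces\`aro averages $\bar\nu_T:=T^{-1}\sum_{t=1}^T\nu_t$. These are still tight, and by Prokhorov some subsequence $\bar\nu_{T_k}$ converges weakly to a law $\nu$. Each coordinate marginal of $\bar\nu_T$ is a Ces\`aro average of $\law(X_t)$ or $\law(X_{t-1})$, and these converge to $\mu$. Hence both marginals of $\nu$ equal $\mu$.

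Next we show $\nu$ is invariant. Let $K$ be the transition kernel of \Cref{eq:state-transition}, and write $Kf(x,y)=\E f(Ax-wy+D,x)$. If $f$ is bounded and continuous, then $Kf$ is bounded and continuous by dominated convergence, because the map is continuous in $(x,y)$ for fixed noise. Thus $K$ is Feller. We have $\nu_t K=\nu_{t+1}$, so
\[
\bar\nu_T K-\bar\nu_T=T^{-1}(\nu_{T+1}-\nu_1),
\]
which has total variation at most $2/T$. Testing against bounded continuous $f$ and using the Feller property, we pass to the limit along $T_k$ and obtain $\nu K=\nu$. This Feller and Ces\`aro step is the main technical point, since it replaces the missing convergence of the pairs.

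Finally, $\nu$ is an invariant law whose first marginal is $\mu$. By \Cref{thm:main}, applied in the normalized coordinates (or after the affine map \Cref{eq:affine-normalization}), $\mu$ is not Gaussian. Consequently $\nu$ is not Gaussian either, because a Gaussian law on $\R^2$ has Gaussian marginals.

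For the last claim, suppose $X_t$ were Gaussian along infinitely many $t$. Write $X_t\sim N(m_t,s_t^2)$, where a point mass is allowed as the degenerate case $s_t=0$. Weak convergence along that subsequence forces $(m_t,s_t)$ to be bounded, since tightness controls the parameters of Gaussians. Passing to a further subsequence, $(m_t,s_t)\to(m,s)$, so $\mu=N(m,s^2)$ is Gaussian or a point mass.

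If $s>0$ this contradicts the previous step. If $s=0$, then $\mu$ is a point mass, and an invariant $\nu$ with Dirac marginals is itself a Dirac mass at a fixed point $(x,x)$. We then need $Ax-wx+D=x$ almost surely. Since $D=V-U$ varies nondegenerately, this requires the coefficient of the noise to vanish, i.e. $x=\pm1$ simultaneously for $U$ and $V$, which is impossible (equivalently, it contradicts $q>0$ from \Cref{sec:second}). Hence $X_t$ is non-Gaussian for all large $t$.
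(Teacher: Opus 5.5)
Your construction of $\nu$ (tightness of the pair laws, Ces\`aro averaging, the Feller property of the kernel, and the identity $\bar\nu_T K-\bar\nu_T=T^{-1}(\nu_{T+1}-\nu_1)$) is exactly the paper's argument. For the claim about large $t$, however, you take a different and more careful route.

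\textbf{How the paper argues.} The paper standardizes, $\tilde X_t=X_t/\sqrt{\mathrm{Var}(X_t)}$, and fixes a metric $d$ for weak convergence. It then combines $d(\tilde\mu,N(0,1))>0$ with $d(\tilde X_t,\tilde\mu)\to0$. This has two weak points.
\begin{enumerate}
\item It uses convergence of $\mathrm{Var}(X_t)$. This does not follow from $\law(X_t)\Rightarrow\mu$ alone, and the paper only asserts it.
\item As written, it rules out only $\tilde X_t\sim N(0,1)$, not every Gaussian law. A Gaussian $X_t$ with nonzero mean gives $\tilde X_t\sim N\bigl(m_t/\sqrt{\mathrm{Var}(X_t)},1\bigr)$.
\end{enumerate}

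\textbf{How you argue.} You use instead that the family $\{N(m,s^2):m\in\R,\ s\ge0\}$ is closed under weak limits: tightness bounds the parameters, and the laws depend continuously on them. Hence Gaussianity along infinitely many $t$ would force $\mu$ itself to be Gaussian or a Dirac mass. The Gaussian case is excluded by \Cref{thm:main}. The Dirac case is excluded by your direct check that $\delta_{(x,x)}$ is invariant only if $(1-x)V=(1+x)U$ almost surely, which is impossible.

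\textbf{Comparison.} Your version needs nothing beyond weak convergence, and it covers nonzero means and degenerate Gaussians, so it is a complete version of what the paper only sketches. The paper's route also works, but only after two additions: convergence of the mean and variance (available from mean-square stability with square-integrable initial data), and a centering step.
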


\begin{proof} Since this is not the main purpose, we will be brief with this proof and will leave out most of the details.  

Let $\nu_t=\law(X_t,X_{t-1})$ and let $\mathcal P$ be the transition kernel.  Both coordinate marginals of $\nu_t$ converge to $\mu$, so $(\nu_t)$ is tight.  The Ces\`aro averages
\[
 \overline\nu_N=\frac1N\sum_{t=1}^N\nu_t
\]
have weakly convergent subsequences.  The kernel is Feller because the update is continuous and the noise is bounded, and
\[
 \overline\nu_N\mathcal P-\overline\nu_N
 =\frac{\nu_{N+1}-\nu_1}{N}.
\]
Testing against bounded continuous functions and passing to a subsequential limit proves invariance.  Ces\`aro averaging preserves each marginal limit, so both marginals of the limit are $\mu$.

To pass to the conclusion about $X_t$, we just have to notice that because the convergence in distribution is metrizable, we can choose any metric $d$ which does that and argue that $d(\tilde{\mu},N(0,1))>0$ where $\tilde{\mu}$ is the limiting distribution of $\tilde{X}_t :=\frac{X_t}{\sqrt{\mathrm{Var}{X_t}}}$.  Since $d(\tilde{X_t},\tilde{\mu})$ converges to $0$ combined with the main Theorem yields that for large $t$ $d(\tilde{X}_t,N(0,1))>0$, proving the claim.  Notice that we use here the fact that $\mathrm{Var}(X_t)$ indeed converges.  \qedhere  
\end{proof}

\section{The raw-moment formulation and the exact bridge}\label{sec:raw-bridge}

The preceding Hermite proof is the shorter certificate.  A separate raw-moment implementation provides an important independent audit and produces the implicit curves used in the figures.

For the normalized stationary pair, put
\[
 m_{r,s}=\E[X^rY^s],
 \qquad
 \vct M_n=(m_{n,0},m_{n-1,1},\ldots,m_{0,n})^{\mathsf T}.
\]
Write $a=1+w-\phi_1-\phi_2$, $\eta=\phi_2-\phi_1$, and
\[
 \alpha_{i,k}(w,c)=\E[a^i\eta^k]\in\Q[w,c].
\]
Stationarity and the multinomial theorem give, for $r+s=n$,
\begin{equation}\label{eq:raw-master}
 m_{r,s}
 =\sum_{i+j+k=r}\frac{r!}{i!j!k!}(-w)^j
 \alpha_{i,k}\,m_{i+s,j}.
\end{equation}
The $k=0$ terms form a homogeneous operator $T_n$, while $k\ge2$ involves lower even degrees.  Hence
\begin{equation}\label{eq:raw-linear}
 (I-T_n)\vct M_n
 =b_n(w,c;\vct M_0,\vct M_2,\ldots,\vct M_{n-2}).
\end{equation}
At degree $2k$, define the marginal Gaussian defect
\begin{equation}\label{eq:raw-defect}
 \Delta_{2k}=m_{2k,0}-(2k-1)!!\,m_{2,0}^k.
\end{equation}
Fraction-free solution and clearing of common factors yield primitive compatibility polynomials
\[
 Q_4(w,c),\qquad Q_6(w,c),\qquad Q_8(w,c).
\]
The raw implementation uses adjugate identities
\[
 \det(I-T_n)\vct M_n=\operatorname{adj}(I-T_n)b_n,
\]
so its necessary polynomial conditions remain valid even before possible singular moment operators are excluded.

The two derivations meet exactly at fourth order.

\begin{proposition}[Exact bridge between coordinates and eliminants]\label{prop:bridge}
With the normalizations used in the distributed exact files,
\begin{equation}\label{eq:Q4-F-bridge}
 Q_4\bigl(w,(1+w)(1-\rho)\bigr)=-(1+w)^4F(w,\rho).
\end{equation}
Moreover, the primitive degree-$107$ factor of $\Res_c(Q_4,Q_6)$ is exactly the polynomial $R(w)$ in \Cref{eq:resultant-factorization}.
\end{proposition}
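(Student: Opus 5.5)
The proof has two parts. The first is a structural argument showing why \Cref{eq:Q4-F-bridge} must hold up to a unit. The second is an exact computation that fixes the constant and checks the resultant factor.

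\textbf{The fourth-order identity.} Under the substitution $c=(1+w)(1-\rho)$ from \Cref{eq:rho-q}, the raw system \Cref{eq:raw-linear} at $n=4$ is the Hermite system \Cref{eq:master-Hermite} at $n=4$, written in a different basis. Indeed, the identity \Cref{eq:raw-Hermite} and the scaling $X=\sqrt q\,Z$ give an invertible, triangular, unimodular-up-to-powers-of-$q$ change of variables between $\vct M_4$ and $(M_{40},M_{31},M_{22},M_{13},M_{04})$. The lower-degree data agree as well: $\vct M_2$ is determined by $q$ and $r=\rho q$.

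Imposing $\Delta_4=0$ is equivalent to $M_{40}=0$, and by the symmetric equation also $M_{04}=0$. So the raw route asks for consistency of the same $5$ linear equations in $4$ raw unknowns plus the Gaussian constraint. The Hermite route asks the same after eliminating $M_{13},u,v$ and reading off the $k=0$ residual \Cref{eq:fourth-residual}.

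Both residuals are therefore, up to nonvanishing rational factors, the same rational function of $(w,\rho)$. Concretely, $\Delta_4\cdot\det(I-T_4)$ equals a product of powers of $(1+w)$, $(1-\rho)$, $\mathcal Q$, $\mathcal H$ and $\det(I-T_4)$ times $F$.

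After the primitive normalization defining $Q_4$, every factor other than $F$ and a power of $(1+w)$ is stripped. This uses the following facts:
\begin{itemize}
\item $F$ is primitive and irreducible over $\Z[w,\rho]$, which is to be checked.
\item $\mathcal H$ corresponds to a factor of $\det(I-T_4)$ under the substitution.
\item $\mathcal Q$ corresponds to $D_2$, via $D_2\bigl(w,(1+w)(1-\rho)\bigr)=(1+w)\mathcal Q$.
\end{itemize}

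\textbf{Fixing the constant.} The remaining constant and the exponent $4$ are pinned down by degree bookkeeping. Substituting $c=(1+w)(1-\rho)$ into a polynomial of $c$-degree $4$ yields a factor $(1+w)$ for each power of $c$ in the top homogeneous part. This is not a proof on its own. In the actual proof I would simply expand $Q_4\bigl(w,(1+w)(1-\rho)\bigr)+(1+w)^4F(w,\rho)$ exactly in $\Z[w,\rho]$ from the distributed files and check that it is the zero polynomial. This is a finite exact computation and is the rigorous content of \Cref{eq:Q4-F-bridge}.

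\textbf{The resultant claim.} Set $\tilde Q_6(w,\rho):=Q_6\bigl(w,(1+w)(1-\rho)\bigr)$. By the same basis-change argument, $\tilde Q_6$ equals $G$ up to a unit times powers of $(1+w)$, $(1-\rho)$, $\mathcal H$ and $F$-independent factors. I would verify this again by exact division.

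The substitution $\rho\mapsto c$ is affine with leading coefficient $-(1+w)$. Resultants transform under such a change of variable by a power of $(1+w)$. Hence
\[
\Res_c(Q_4,Q_6)=\pm(1+w)^{e}\,\Res_\rho\bigl((1+w)^4F,\tilde Q_6\bigr),
\]
and multiplicativity of the resultant splits off the extra factors into products of cyclotomic-type factors and powers of $w,1\pm w$.

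Comparing with \Cref{eq:resultant-factorization}, the only irreducible factor of degree $107$ is $R$. I will also check that no extraneous factor of degree $107$ appears, which can be settled by directly computing $\Res_c(Q_4,Q_6)$, factoring it, and testing equality of its primitive degree-$107$ part with $R$ coefficientwise.

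\textbf{Main obstacle.} The main obstacle is not conceptual but bookkeeping. The raw formulation may carry extraneous factors from $\det(I-T_6)$ and from the adjugate clearing, and these could in principle share a degree-$107$ factor. I would first try to exclude this by computing $\gcd(R,\det(I-T_6)|_{c=(1+w)(1-\rho)}\text{-eliminant})$ modulo $23$ and confirming it is trivial. If it is not trivial, I would fall back on the direct full factorization over $\Q$.
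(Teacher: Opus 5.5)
Your decisive steps are the paper's proof. The paper proves both claims purely by exact computation: it expands $Q_4\bigl(w,(1+w)(1-\rho)\bigr)+(1+w)^4F(w,\rho)$ in $\Z[w,\rho]$, then factors $\Res_c(Q_4,Q_6)$ and compares its primitive degree-$107$ factor with $R$ coefficientwise, as in your fallback. The structural argument you wrap around this, however, contains two false claims, and one of the checks you propose would fail.

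\textbf{The role of $\mathcal H$.} $\mathcal H$ is not a factor of $\det(I-T_4)$ after the substitution. Up to a factor $\pm\tfrac12(1+w)^2$, it is the determinant of the $3\times3$ block formed by the rows $k=1,2,3$ in the unknowns $M_{31},M_{22},M_{13}$, which is the system that determines $u$. A Schur-complement identity gives, up to sign,
$\text{(residual of row }k=0) = \bigl(\det(\text{full})/\det(\text{block})\bigr)\,M_{40}$.
So $\mathcal H$ cancels against the $1/\mathcal H$ in \eqref{eq:fourth-residual} and never reaches $\Delta_4\det(I-T_4)$. Concretely, $\mathcal H(1,0)=0$, whereas at the corresponding point $(w,c)=(1,2)$ one computes $\det(I-T_4)=-16/3$. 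Divisibility is a polynomial statement, so a point outside $\Ctwo$ suffices to refute it.

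\textbf{The relation between $\tilde Q_6$ and $G$.} $\tilde Q_6$ is not $G$ times other factors, so the exact division you propose would fail. The raw $Q_6$ is built from the unconstrained solution of $(I-T_4)\vct M_4=b_4$. For that solution, in Hermite terms, $M_{40}=\Delta_4/q^2$ is a nonzero rational multiple of $F$. Moreover, $\Delta_6/q^3=M_{60}+15M_{40}$ contains $M_{40}$ explicitly. The whole fourth-order vector also enters the right-hand side of the degree-$6$ system, and it differs from $(0,u,v,-wu)$ by $M_{40}$ times a fixed vector. The polynomial $G$, by contrast, is built from the constrained data $(M_{40},M_{31},M_{22},M_{13})=(0,u,v,-wu)$.

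The two constructions therefore agree only modulo $F$. What you get is a congruence $D\,\tilde Q_6\equiv E\,G\pmod F$ for suitable nonzero polynomials $D,E$, not a factorization. This congruence would still support your resultant step: $\Res_\rho(F,g)$ depends only on the values of $g$ at the roots of $F$, up to powers of $\operatorname{lc}_\rho F$. But you would then also have to control $\Res_\rho(F,D)$ and $\Res_\rho(F,E)$. The direct factorization and coefficientwise comparison you list at the end bypasses all of this, and it is exactly the route the paper takes.
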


\begin{proof}
Both claims are exact coefficient comparisons in $\Z[w,\rho]$ and $\Z[w]$, respectively.  The short script \texttt{code/bridge\_raw\_hermite.py} performs the substitutions, factors the raw resultant, normalizes primitive signs, and asserts equality.  Its transcript is included in the audit package.
\end{proof}

The independent raw-moment certificate also computes
\[
 R_{46}=\Res_c(Q_4,Q_6),
 \qquad
 R_{48}=\Res_c(Q_4,Q_8),
\]
with degrees $208$ and $458$, and proves
\begin{equation}\label{eq:raw-no-triple}
 Q_4(w,c)=Q_6(w,c)=Q_8(w,c)=0
 \quad\Longrightarrow\quad
 \text{no solution with }-1<w<1,\ c>0.
\end{equation}
It separately excludes the singular branches needed for sequential raw-moment inversion:
\begin{align}
&Q_4=0,\quad\det(I-T_4)=0,
&&-1<w<1,\ c>0,\label{eq:sing4}\\
&Q_4=Q_6=0,\quad\det(I-T_6)=0,
&&-1<w<1,\ c>0.\label{eq:sing6}
\end{align}
The checks use exact characteristic-zero resultants, Sturm root isolation, and algebraic sign determination.  Thus the raw and Hermite implementations differ in basis, elimination strategy, and arithmetic backend, yet arrive at the same obstruction.

\section{Geometry of the fourth-, sixth-, and eighth-order conditions}\label{sec:geometry}

The raw-moment calculation produces exact integer polynomials $Q_4(w,c)$, $Q_6(w,c)$, and $Q_8(w,c)$.  Their zero sets are necessary conditions for matching, respectively, the Gaussian fourth, sixth, and eighth moments.  Figure~\ref{fig:curves} uses $c$ on the horizontal axis and $w$ on the vertical axis and displays only the mean-square region \Cref{eq:C2}, whose boundary is
\[
 c=\frac{12(1-w^2)}{7-5w}.
\]

For numerical stability the plotting program writes
\[
 Q_{2k}(w,c)=\sum_j a_j(w)c^j
\]
and evaluates the scaled quantity
\begin{equation}\label{eq:plot-scaling}
 \widehat Q_{2k}(w,c)=
 \frac{Q_{2k}(w,c)}{\sum_j|a_j(w)|c^j}
\end{equation}
by Horner's rule.  For $c\ge0$ the denominator is positive wherever the coefficient vector is nonzero, so \Cref{eq:plot-scaling} has exactly the same zero set as $Q_{2k}$.  A level-zero marching-squares routine is applied on a $1700\times1700$ grid, after masking points outside \Cref{eq:C2}.  The computational supplement reports a comparison with an $850\times850$ grid, direct residual evaluations, and high-precision refinement of the exceptional intersection.  The typical grid-to-grid displacement, measured by the $95\%$ nearest-distance quantile, is between $1.1\times10^{-3}$ and $1.9\times10^{-3}$ in the $(c,w)$ plane.  These figures are explanatory only; no plotted point is used in the proof.

\begin{figure}[htbp]
\centering
\includegraphics[width=0.90\textwidth]{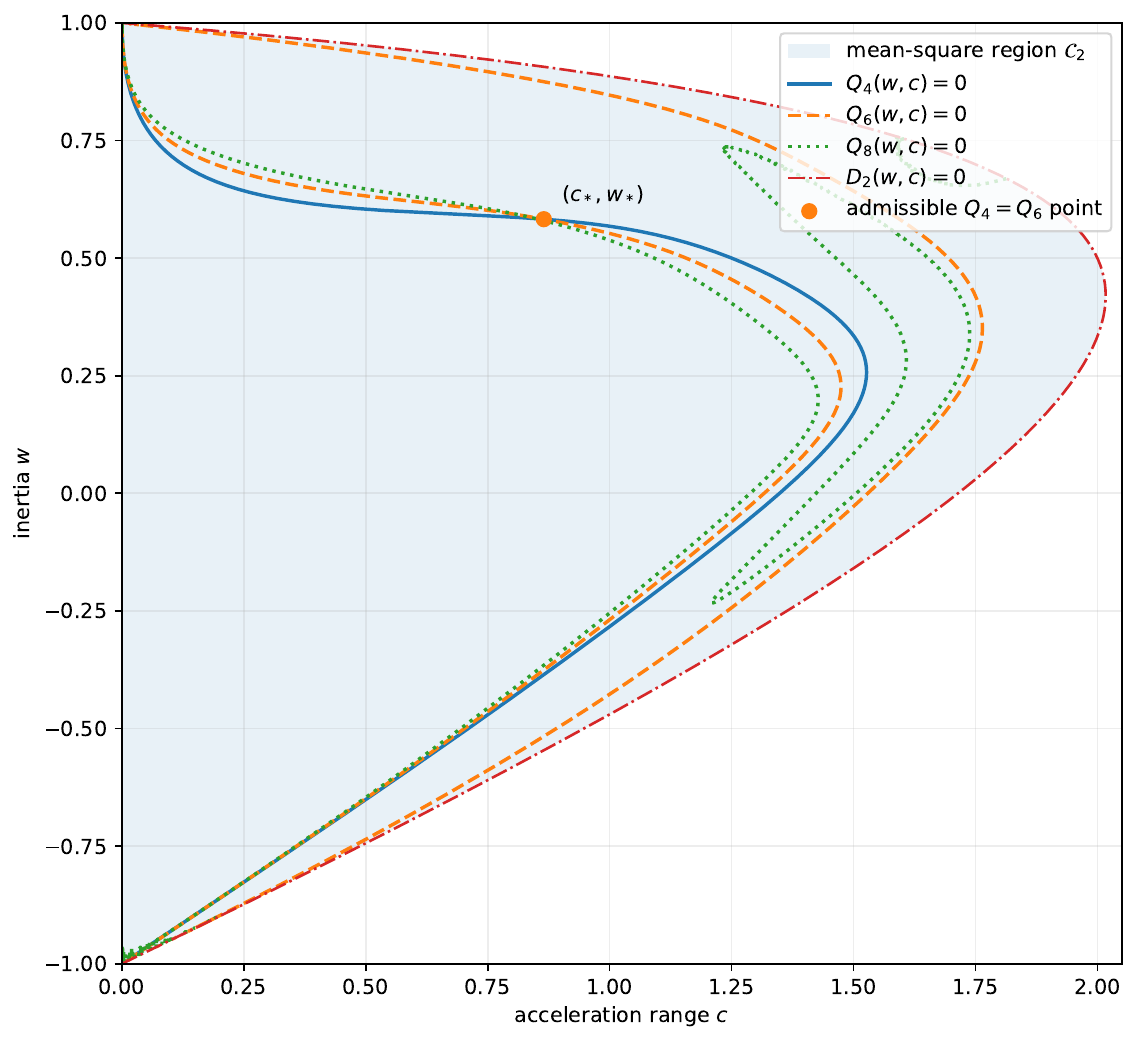}
\caption{Zero sets of the exact moment-defect polynomials $Q_4,Q_6,Q_8$ in the open mean-square region.  The horizontal coordinate is $c$ and the vertical coordinate is $w\in(-1,1)$.  The dash-dotted curve is the mean-square boundary.  The marked point is the admissible $Q_4$--$Q_6$ intersection.}
\label{fig:curves}
\end{figure}

The fourth- and sixth-order conditions genuinely intersect inside the region.  High-precision solution of the two exact polynomial equations gives
\begin{align}\label{eq:exceptional-point}
 w_*&=0.5827476217876752277761786590065\ldots,\notag\\
 c_*&=0.8650336137460312613574937997118\ldots,
\end{align}
with
\begin{equation}\label{eq:rho-star}
 \rho_*=1-\frac{c_*}{1+w_*}
 =0.4534608033282042285057224798475\ldots.
\end{equation}
At this point the standardized stationary moments satisfy
\begin{equation}\label{eq:diagnostic-moments}
 \frac{m_{4,0}}{m_{2,0}^2}=3,
 \qquad
 \frac{m_{6,0}}{m_{2,0}^3}=15,
 \qquad
 \frac{m_{8,0}}{m_{2,0}^4}=106.579793218089576\ldots\ne105.
\end{equation}
Thus fourth and sixth moments alone give a genuine false positive for Gaussianity. Moreover, the excess eighth standardized moment indicates a substantially greater contribution of extreme deviations than in the Gaussian case, providing rigorous support for the tail-fattening phenomenon previously observed empirically in the PSO literature (see, e.g., \citet{blackwell2008examination}).

\begin{figure}[htbp]
\centering
\includegraphics[width=0.90\textwidth]{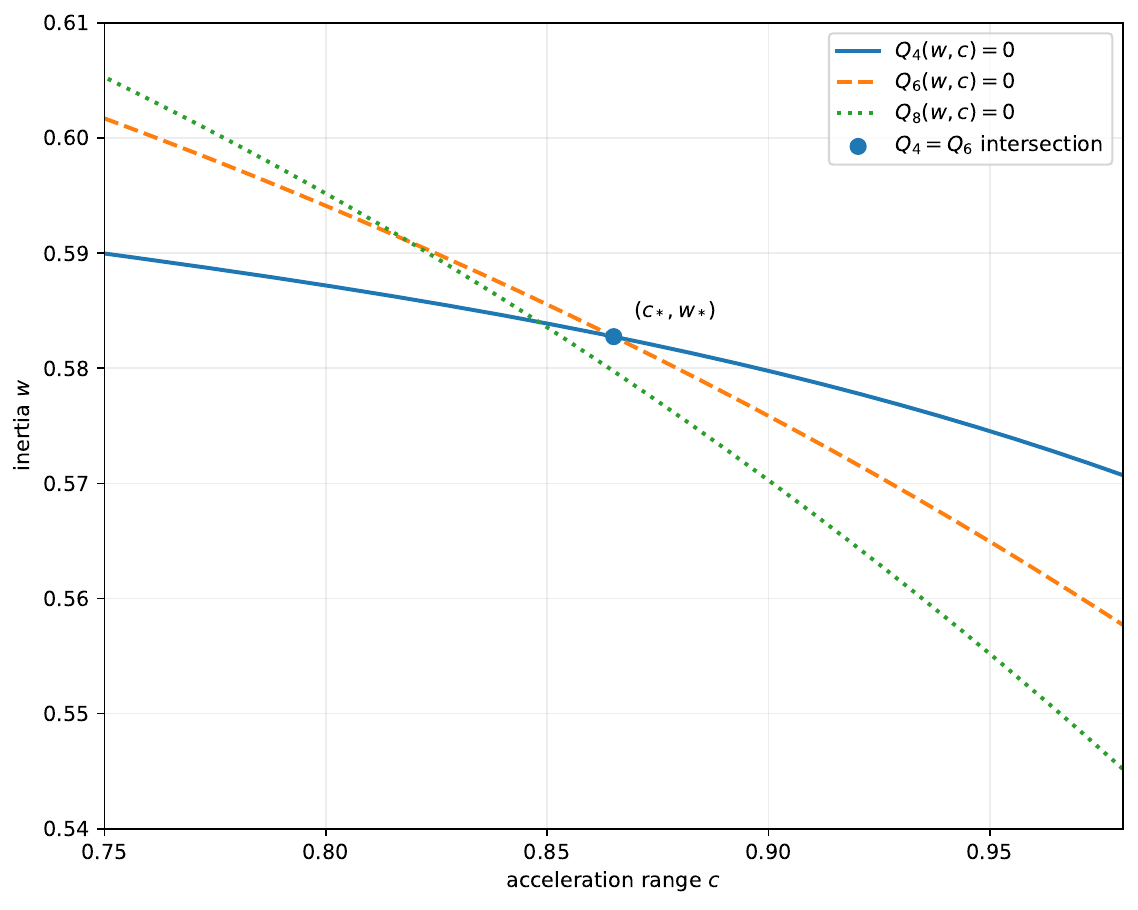}
\caption{Zoom around $(c_*,w_*)$.  The curves $Q_4=0$ and $Q_6=0$ meet, while $Q_8=0$ misses the point.  The nearest point on the plotted $Q_8$ curve is about $2.94\times10^{-3}$ away in the parameter plane.}
\label{fig:zoom}
\end{figure}

This example cautions against diagnosing Gaussianity from kurtosis and one additional even moment.  A random-coefficient autoregressive law can imitate a Gaussian through a surprisingly long finite segment of its moment sequence.

\section{Verification and reproducibility}\label{sec:audit}

The code and the supplementary material can be found at 

\url{https://github.com/ioionel/PSO-and-Gaussian-Conjecture.git}.

The proof has four complementary verification layers, summarized in \Cref{tab:audit}.  The two modular runs rebuild the Hermite equations independently for different primes.  The raw-moment run works over the rational numbers and uses a different basis and elimination path.  The bridge comparison then checks that the two formulations produce exactly the same fourth-order obstruction and the same degree-$107$ eliminant.

\begin{table}[htbp]
\centering
\caption{Verification layers.  ``Exact'' means that no floating-point arithmetic enters the stated check.}
\label{tab:audit}
\begin{tabularx}{\textwidth}{@{}>{\raggedright\arraybackslash}p{0.21\textwidth}>{\raggedright\arraybackslash}X>{\centering\arraybackslash}p{0.12\textwidth}@{}}
\toprule
Layer & What is checked & Result\\
\midrule
Hermite certificate, $p=23$ & Rebuilds the degree-$4,6,8$ systems; derives $F,G,R,A_*,B_*$; checks seven factor branches with degrees $1,2,3,6,10,30,55$. & Exact pass\\
Hermite cross-check, $p=1{,}000{,}003$ & Repeats the same construction with a different factorization, of degrees $1,3,9,11,25,58$. & Exact pass\\
Raw-moment certificate & Checks $Q_4,Q_6,Q_8$, characteristic-zero resultants, real-root isolation, algebraic signs, and singular-system exclusions. & Exact pass\\
Raw/Hermite bridge & Verifies \Cref{eq:Q4-F-bridge} and equality of the two primitive degree-$107$ eliminants coefficient by coefficient. & Exact pass\\
Curve audit & Compares two plotting grids, evaluates scaled polynomial residuals, and refines the $Q_4$--$Q_6$ intersection at 90-digit precision. & Diagnostic pass\\
\bottomrule
\end{tabularx}
\end{table}

All executable material is supplied separately as Online Resource~2.  The computational supplement gives a file-by-file map, exact commands, expected terminal endings, software versions, measured resource use, and a detailed explanation of the curve calculations.  The main theorem does not depend on file hashes or on floating-point plots.


\section{Discussion and extensions}

The moment architecture extends mechanically.  At raw total degree $n$, one builds an $(n+1)\times(n+1)$ system.  In the Hermite basis, Gaussian marginal conditions remove the pure-coordinate unknowns and often reduce the system further.  Orders ten and twelve introduce no conceptual novelty, although coefficient growth becomes severe.

Several extensions are natural.
\begin{itemize}[leftmargin=1.5em]
\item If $\phi_1\sim\operatorname{Unif}[0,c_1]$ and $\phi_2\sim\operatorname{Unif}[0,c_2]$, swap symmetry disappears unless $c_1=c_2$.  Odd total moments then become informative, but the parameter space grows to $(w,c_1,c_2)$.
\item For nonuniform acceleration laws, only the noise moments $\nu_{ij}$ or $\alpha_{i,k}$ change.  The same hierarchy works whenever sufficiently many innovation moments exist.
\item With moving attractors, the fixed-attractor reduction no longer closes.  One must enlarge the state or work conditionally over a stagnation epoch.
\item A characteristic-function proof may exist.  The moment certificate can be interpreted as extracting Taylor coefficients from the exact two-variable characteristic-function equation until incompatibility appears.
\end{itemize}

\section{Conclusion}

For one-dimensional stagnant PSO with equal independent uniform acceleration ranges and two distinct fixed attractors, the stationary position law is not Gaussian anywhere in the open mean-square region.  The fourth and sixth Gaussian moment identities possess a genuine admissible common match, so low-order moment diagnostics alone are insufficient.  The eighth-order stationary equations remove the remaining degree-$107$ eliminant.

The argument is robust in two senses.  First, the Hermite formulation reduces the final large calculation to exact arithmetic modulo a small prime, and the result is repeated with a much larger prime.  Second, an independently organized raw-moment computation reproduces the same fourth-order curve and the same degree-$107$ eliminant, then reaches the same conclusion over the rational numbers.  The empirical bell curve is therefore Gaussian-like, but not Gaussian.

\section*{Statements and Declarations}

\noindent\textbf{Funding.} Ionel Popescu was supported by Informational Buildup Foundation (IBF) through the IBF Research Fellowships granted via Institute of Mathematics of the Romanian Academy. 

\noindent\textbf{Competing interests.} The authors declare that they have no known competing financial interests or personal relationships that could have appeared to influence the work reported in this paper.


\noindent\textbf{Data availability.} No empirical data set is used.  The exact polynomial artifacts and numerical validation tables are supplied at \url{https://github.com/ioionel/PSO-and-Gaussian-Conjecture}.

\noindent\textbf{Code availability.} The complete verification code, plotting code, exact inputs, and expected logs are supplied at \url{https://github.com/ioionel/PSO-and-Gaussian-Conjecture}.

\noindent\textbf{Use of generative AI.} During preparation of the manuscript, ChatGPT was used to assist with symbolic-computation scripting, manuscript organization, and language revision.  Every computer-generated formula and every computational certificate reported in the paper was independently reviewed and checked by exact executable tests.  The authors provided the ideas, the methods and take full responsibility for the content.

\noindent\textbf{Corresponding author email address:} \url{ionel.popescu@fmi.unibuc.ro}.


\bibliographystyle{plainnat}
\bibliography{references}
\end{document}